\theoremstyle{definition}
\newtheorem{thm}{Theorem}
\newtheorem{prop}[thm]{Proposition}
\newtheorem{lem}[thm]{Lemma}
\newtheorem{defn}[thm]{Definition}
\newtheorem{rem}[thm]{Remark}
\newtheorem{ex}[thm]{Example}
\newtheorem*{first-step}{First Step}
\newtheorem*{third-step}{Third Step}
\newtheorem*{second-step}{Second Step}
\newtheorem{conj}{Conjecture}
\newtheorem*{main idea}{Main idea}
\newtheorem*{mainresult}{Main result}
\newtheorem*{naitosagakii}{Naito-Sagaki Conjecture}
\newtheorem{cl}{Claim}
\newcommand{\op}{\operatorname}
\newcommand{\tightoverset}[2]{%
  \mathop{#2}\limits^{\vbox to -.5ex{\kern-0.75ex\hbox{$#1$}\vss}}}
\newcommand{\tightunderset}[2]{%
  \mathop{#2}\limits_{\vbox to -.5ex{\kern-0.75ex\hbox{$#1$}\vss}}}
\newcommand\smvee{\raise0.9ex\hbox{$\scriptscriptstyle\vee$}}
\newcommand{\E}{{\rm E}}
\newcommand{\s}{{\rm S}}
\newcommand{\N}{{\rm N}}
\newcommand{\e}{\varepsilon}
\newcommand{\p}{{\rm P}}
\newcommand{\li}{{\rm L}}
\newcommand{\<}{\langle}
\newcommand{\rr}{\rangle}
\date{}                                           
 \gdef\Young#1{\hbox{$\vcenter
 {\mathcode`,="8000\mathcode`|="8000
  \def,{\global\advance\cols by 1 &}%
  \def|{\cr
        \multispan{\the\cols}\hrulefill\cr
        &\global\cols=2 }%
  \offinterlineskip\everycr{}\tabskip=0pt
  \dimen0=\ht\strutbox \advance\dimen0 by \dp\strutbox
  \halign
   {\vrule height \ht\strutbox depth \dp\strutbox##
    &&\hbox to \dimen0{\hss$##$\hss}\vrule\cr
    \noalign{\hrule}&\global\cols=2 #1\crcr
    \multispan{\the\cols}\hrulefill\cr%
   }
 }$}}
\gdef\Skew(#1:#2){\hbox{$\vcenter
{\mathcode`,="8000\mathcode`|="8000
  \dimen0=\ht\strutbox \advance\dimen0 by \dp\strutbox
  \def\boxbeg{\vbox
    \bgroup\hrule\kern-0.4pt\hbox to\dimen0\bgroup\strut\vrule\hss$}%
  \def\boxend{$\hss\egroup\hrule\egroup}%
  \def,{\boxend\boxbeg}%
  \def|##1:{\boxend\vrule\egroup\nointerlineskip\kern-0.4pt
    \moveright##1\dimen0\hbox\bgroup\boxbeg}%
  \def\\##1\\##2:{\boxend\vrule\egroup\nointerlineskip\kern-0.4pt
    \kern ##1\dimen0\moveright##2\dimen0\hbox\bgroup\boxbeg}%
  \moveright#1\dimen0\hbox\bgroup\boxbeg#2\boxend\vrule\egroup
 }$}}
\title[Littelmann paths and Littlewood-Richardson Sundaram tableaux]{On a conjecture by Naito-Sagaki: Littelmann paths and Littlewood-Richardson Sundaram tableaux}
\author{Jacinta Torres}
\begin{document}
\maketitle

\begin{abstract}
We prove a special case of a conjecture of Naito-Sagaki about a branching rule for the restriction of irreducible representations of $\mathfrak{sl}(2n,\mathbb{C})$ to $\mathfrak{sp}(2n,\mathbb{C})$. The conjecture is in terms of certain Littelmann paths, with the embedding given by the folding of the type $A_{2n-1}$ Dynkin diagram. We propose and motivate an approach to the conjecture in general, in terms of Littlewood-Richardson Sundaram tableaux. 
\end{abstract}

\section*{Introduction}
Consider the automorphism of $\mathfrak{sl}(2n,\mathbb{C})$ induced by the folding of the Dynkin diagram of type $A_{2n-1}$ along the middle vertex.

\begin{center}
  \begin{tikzpicture}[scale=.4]
    \foreach \x in {0,...,4}
    \draw[xshift=\x cm,thick] (\x cm,0) circle (.3cm);
    \draw[dotted,thick] (0.3 cm,0) -- +(1.4 cm,0);
    \foreach \y in {1.15,...,2.15}
    \draw[xshift=\y cm,thick] (\y cm,0) -- +(1.4 cm,0);
    \draw[xshift=3.15 cm, dotted,thick] (3.15 cm,0) -- +(1.4 cm,0);
\node (a) at (0,0){};
\node (b) at (7.8 cm,0){};
\node[below] at (0,-0.2) {\tiny{1}};
\node[below] at (2,-0.2) {\tiny{n-1}};
\node[below] at (4,-0.2) {\tiny{n}};
\node[below] at (6,-0.2) {\tiny{n+1}};
\node[below] at (8,-0.2) {\tiny{2n-1}};
\draw[thick, <->] (a) edge[bend left=45] (b);
  \end{tikzpicture}
\end{center}

\noindent The set of $\sigma$-fixed points $\mathfrak{sl}(2n,\mathbb{C})^{\sigma}$ is a sub Lie algebra isomorphic to $\mathfrak{sp}(2n,\mathbb{C})$. Let $\mathfrak{h} \subset \mathfrak{b} \subset \mathfrak{sl}(2n, \mathbb{C})$ be the Cartan subalgebra of diagonal matrices, respectively the Borel subalgebra of upper triangular matrices, and let $\lambda \in \mathfrak{h}^{*}$ be an integral weight that is dominant with respect to this choice. Let $\p_{SSYT}(\lambda)$ be the Littelmann path model for the simple module $\li(\lambda)$ of $\mathfrak{sl}(2n,\mathbb{C})$ associated to the set of semi-standard Young tableaux of shape $\lambda$.  It consists of paths $\pi:[0,1] \rightarrow \mathfrak{h}^{*}_{\mathbb{R}}$, which may be restricted to paths 
$\op{res}(\pi): [0,1] \rightarrow (\mathfrak{h}^{\sigma}_{\mathbb{R}})^{*}$.\\

A distinguishing feature of a Littelmann path model for a simple module $\li(\lambda)$ is the existence of a unique path $\pi^{\lambda}$ contained in the dominant Weyl chamber with endpoint $\pi^{\lambda}(1) = \lambda$. In this spirit, denote the set of restricted paths in $\op{res}(\p_{SSYT}(\lambda))$ that are contained in the corresponding dominant Weyl chamber by $\op{domres}(\lambda)$. 

\begin{naitosagakii}
The following decomposition holds:

\begin{align*}
\op{res}^{\mathfrak{g}}_{\mathfrak{g}^{\sigma}} (\li(\lambda)) = \underset{\delta \in \op{domres}(\lambda)}{\bigoplus} \li(\tightoverset{\hbox{\textasciicircum}}{\delta(1)}).
\end{align*}

\noindent where $\li(\tightoverset{\hbox{\textasciicircum}}{\delta(1)})$ denotes the simple module for $\mathfrak{sl}(2n,\mathbb{C})^{\sigma}$ of highest weight $\tightoverset{\hbox{\textasciicircum}}{\delta(1)}$.
\end{naitosagakii}

\begin{mainresult}
The Naito-Sagaki Conjecture is true for $n=2$, and, for $n \geq 3$, for weights $\lambda$ that belong to the span of the first three fundamental weights. 
\end{mainresult}

It has long been known that branching rules for Levi subalgebras of semi-simple Lie algebras are described by the combinatorics of Littelmann paths. The Lie sub algebra $\mathfrak{sp}(2n,\mathbb{C}) \subset \mathfrak{sl}(2n,\mathbb{C})$ is however not Levi. When the conjecture was originally formulated in \cite{branchconj}, it was proven for weights of hook and rectangular shape, and the proofs used various methods, which were particular to each of the cases. The proof we provide in this paper works in a uniform way for all cases considered in this paper, which were not considered before, and aims to shine light on a new approach towards understanding this conjecture in general. \\

The contents of this paper are organized in seven sections. In Section 1 we introduce basic notation, and in Sections 2 and 3 we recall the combinatorics of tableaux and paths that are needed in order to formulate the Naito-Sagaki conjecture. We state it as  Conjecture \ref{naitosagakiconjecture}, in Section 4. In Section 7 we introduce Littlewood-Richardson Sundaram tableaux, which we will use to show our main result, Theorem \ref{conny2}. In Section 7 we ask some questions related to future work related to Conjecture \ref{naitosagakiconjecture}. 

\subsection*{Acknowledgements}
The author would like to thank St\'ephane Gaussent for introducing her to this problem and Bea Schumann for some very interesting conversations. Thanks also to Daisuke Sagaki for the nice correspondence. The influence of Peter Littelmann and his work are present throughout. Thank you as well to Michael Ehrig, St\'ephane Gaussent and Noah White for their comments on a preliminary version of this paper. The author was financed by the Graduiertenkolleg 1269: Global Structures in Geometry and Analysis, and by the Max Planck Society.

\section{Notation for the Lie algebras}
Let $\mathfrak{h} \subset \mathfrak{sl}(2n, \mathbb{C})$ be the Cartan sub-algebra of diagonal matrices.  Let $\e_{i}$ be the linear map $\mathfrak{h}^{*} \rightarrow \mathbb{C}$ defined by $\op{diag}(a_{1},\hbox{ } \cdots, \hbox{ } a_{2n}) \mapsto a_{i}.$ We write  $\mathfrak{sl}(2n, \mathbb{C}) = \< x_{i}, y_{i}, h_{i} \rr_{ i \in \{1, \cdots, 2n-1\}}$ where $h_{i} = \E_{ii} - \E_{i+1, i+1}$ and where $x_{i}$ and $y_{i}$ are the Chevalley generators corresponding to the simple root $\alpha_{i} := \e_{i} - \e_{i+1}$. The automorphism $\sigma$ is given by

\begin{align*}
\sigma(x_{i}) &= x_{2n-i},\\
 \sigma(y_{i}) &= y_{2n-i}, \hbox{ and }\\
  \sigma(h_{i}) &= h_{2n-i}.
\end{align*}

The fixed point set $\mathfrak{g}^{\sigma}$ is generated as a Lie algebra by $\< {\tightoverset{\hbox{\textasciicircum}}{x}}_{i}, {\tightoverset{\hbox{\textasciicircum}}{y}}_{i}, {\tightoverset{\hbox{\textasciicircum}}{h}}_{i}\rr_{i \in  \{1, \cdots, n\}}$ (see Proposition 7.9 in \cite{kac}), where 

\[
    {\tightoverset{\hbox{\textasciicircum}}{x}}_{i} =  
\begin{cases}
    x_{i} + x_{2n-i}& \text{ if } i \in [1, n) \\
    x_{n}& \text{ if }  i = n
\end{cases}
\]

\[
    {\tightoverset{\hbox{\textasciicircum}}{y}}_{i} =  
\begin{cases}
    y_{i} + y_{2n-i}& \text{ if } i \in [1, n) \\
    y_{n}& \text{ if }  i = n
\end{cases}
\]

\[
    {\tightoverset{\hbox{\textasciicircum}}{h}}_{i} =  
\begin{cases}
    h_{i} + h_{\alpha_{2n-i}}& \text{ if } i \in [1, n) \\
    h_{n}& \text{ if }  i = n.
\end{cases}
\]
\noindent
This Lie algebra is isomorphic to $\mathfrak{sp}(2n, \mathbb{C})$ (see Proposition 7.9 in \cite{kac}) and ${\mathfrak{h}}^{\sigma} = \overset{n}{\underset{ i = 1}{\bigoplus}}{\tightoverset{\hbox{\textasciicircum}}{h}}_{i} = \mathfrak{h} \cap {\mathfrak{g}}^{\sigma}  \subset \mathfrak{h}$ is a Cartan subalgebra. Let $\p_{C_{n}} \subset (\mathfrak{h}^{\sigma})^{*}$ be the set of integral weights of ${\mathfrak{g}}^{\sigma}$ with respect to ${\mathfrak{h}}^{\sigma}$, and let $\p_{A_{2n-1}} \subset \mathfrak{h}^{*}$ be the set of integral weights of $\mathfrak{g}$ with respect to $\mathfrak{h}$.\\

To avoid confusion we will denote elements of $\p_{A_{2n-1}}$ by $\lambda$ and elements of $\p_{C_{n}}$ by $\tightoverset{\hbox{\textasciicircum}}{\lambda}$; in particular, the fundamental weights in  $\p_{A_{2n-1}}$ will be denoted by $\omega_{1}, \cdots, \omega_{2n-1}$, and the fundamental weights in $\p_{C_{n}}$ by ${\tightoverset{\hbox{\textasciicircum}}{\omega}}_{1}, \cdots, {\tightoverset{\hbox{\textasciicircum}}{\omega}}_{n}$. We can then write $$\p_{A_{2n-1}} = \overset{2n-1}{\underset{i = 1}{\bigoplus}}\mathbb{Z}\omega_{i}$$ \noindent and $$\p_{C_{2n}} = \overset{n}{\underset{i = 1}{\bigoplus}}\mathbb{Z}{\tightoverset{\hbox{\textasciicircum}}{\omega}}_{i},$$ \noindent where the direct sums are as $\mathbb{Z}$-modules. Also, for $$\lambda = a_{1}\omega_{1} + \cdots + a_{n}\omega_{n}$$ \noindent we write 
$$\tightoverset{\hbox{\textasciicircum}}{\lambda} = a_{1}{\tightoverset{\hbox{\textasciicircum}}{\omega}}_{1} + \cdots + a_{n}{\tightoverset{\hbox{\textasciicircum}}{\omega}}_{n}$$

\noindent for the corresponding element in $\p_{C_{n}}$. Weights with non-negative coefficients are called \textbf{dominant}. We will denote the corresponding sets by $\p^{+}_{A_{2n-1}}$ and $\p^{+}_{C_{n}}$. We will consider the real vector spaces $\mathfrak{h}_{\mathbb{R}}$ and $\mathfrak{h}^{\sigma}_{\mathbb{R}}$ spanned by the fundamental weights $\omega_{1}, \cdots, \omega_{2n-1}$ respectively ${\tightoverset{\hbox{\textasciicircum}}{\omega}}_{1}, \cdots, {\tightoverset{\hbox{\textasciicircum}}{\omega}}_{n}$. The dominant Weyl chamber is the convex hull of the dominant weights in $\mathfrak{h}^{*}_{\mathbb{R}}$ (respectively $(\mathfrak{h}^{\sigma})^{*}_{\mathbb{R}}$).

\section{Tableaux, words and their paths}
A \textbf{shape} is a finite sequence of non-negative integers $\underline{d} = (d_{1}, \cdots, d_{k})$. An arrangement of boxes of shape $\underline{d}$ is an arrangement of $d_{1}+ \cdots + d_{k}$ columns of boxes such that the first $d_{1}$ columns (from right to left) have one box, and the first $d_{s}$ boxes after the $(d_{1}+ \cdots +d_{s-1})$-th column have $d_{s}$ boxes. To a weight $\lambda = a_{1}\omega_{1}+ \cdots + a_{2n-1}\omega_{2n-1} \in \p_{A_{2n-1}}$ we assign the shape $\underline{d}_{\lambda} = (a_{1}, \cdots, a_{2n-1})$.

\begin{ex} For $\lambda = 3\omega_{1} + \omega_{2}$ we have $\underline{d}_{\lambda} = (3,1)$. To it is associated the following arrangement of boxes:
\label{ch2ex1}
\begin{align*}
\Skew(0:,,,|0:).
\end{align*}
\end{ex}

\noindent
A \textbf{semi-standard tableau} of shape $\underline{d}$ is a filling of an arrangement of boxes of shape $\underline{d}$ with letters from the either one of the ordered alphabets 
$$\mathcal{A}_{2n} = \{1< \cdots< 2n\}$$
\noindent or 
$$\mathcal{C}_{n} = \{1< \cdots < n < \bar{n} < \cdots < \bar{1}\}$$ 
\noindent such that entries are strictly increasing downwards along each column of boxes and weakly increasing along each row from left to right. If the entries of a given semi-standard tableau belong to $\mathcal{A}_{2n}$ we will call it a \textbf{semi-standard Young tableau}, and if they belong to  $\mathcal{C}_{n}$, we will call it a \textbf{symplectic semi-standard tableau}. We will denote the set of semi-standard Young tableau of shape $\underline{d}$ by $\Gamma(\underline{d})^{\op{SSYT}}$. 

\begin{ex}
\label{ch2ex2}
A semi-standard Young tableau of shape $(3,1)$:
\begin{align*}
\Skew(0:\mbox{\tiny{1}},\mbox{\tiny{1}},\mbox{\tiny{2}},\mbox{\tiny{5}}|0:\mbox{\tiny{2}}).
\end{align*}
\end{ex}


Let $\mathcal{W}(\mathcal{A}_{2n})$ denote the word monoid on $\mathcal{A}_{2n}$ and $\mathcal{W}(\mathcal{C}_{n})$ be the word monoid on $\mathcal{C}_{n}$. The \textbf{word} $w(\mathscr{T})$ of a semi-standard tableau $\mathscr{T}$ is obtained by reading its rows, from right to left and top to bottom. 
 
\begin{ex}
\label{ch2ex3}
 The symplectic semi-standard tableau $\mathscr{T} = \Skew(0:\mbox{\tiny{1}},\mbox{\tiny{1}},\mbox{\tiny{2}},\mbox{\tiny{5}}|0:\mbox{\tiny{$\bar 3$}})$ has word $w(\mathscr{T}) = 5211\bar 3$. 

\end{ex}

\section{Restriction of paths that come from words}
\subsection{Paths and their restrictions}
We will consider paths $\pi:[0,1] \rightarrow \mathfrak{h}_{\mathbb{R}}^{*}$ and 
$\pi':[0,1] \rightarrow (\mathfrak{h}_{\mathbb{R}}^{\sigma})^{*}$ starting at the origin and ending at an integral  weight: 

$$\pi(0) = 0 = \pi'(0), \pi(1) \in \p_{A_{2n-1}}, \pi'(1) \in \p_{C_{n}}.$$

\noindent
The map 

\begin{align*}
\mathfrak{h}^{*} &\rightarrow (\mathfrak{h}^{\sigma})^{*}\\
\varphi & \mapsto {\varphi }|_{\mathfrak{h}^{\sigma}}
\end{align*}
\noindent
induces a map $\op{res}': \p_{A_{2n-1}} \rightarrow  \p_{C_{n}}$. Given a path $\pi: [0,1] \rightarrow \mathfrak{h}_{\mathbb{R}}^{*}$, we define a restricted path $\op{res}(\pi)$ by

\begin{align*}
\op{res}(\pi): [0,1] & \rightarrow (\mathfrak{h}_{\mathbb{R}}^{\sigma})^{*}\\
t & \mapsto \op{res}'(\pi(t)).
\end{align*}

\noindent We will also consider the concatenation $\pi_{2}*\pi_{1}$ of two paths $\pi_{1}$ and $\pi_{2}$ with the same codomain: it is the path obtained by translating $\pi_{2}$ to the endpoint $\pi_{1}(1)$ of $\pi_{1}$. A path is \textbf{dominant} if its image is contained in the dominant Weyl chamber.

\subsection{Paths that come from words}
In the first part of this section we follow Section 2 of \cite{branchconj}. Let $w = w_{1}\hbox{ }\cdots \hbox{ } w_{k}$ be a word, either in $\mathcal{W}(\mathcal{A}_{2n})$ or in $\mathcal{W}(\mathcal{C}_{2n-1})$. To it we assign the path:

\begin{align*}
\pi_{w} = \pi_{w_{k}} * \cdots * \pi_{w_{1}},
\end{align*}

\noindent
where, for $w_{i} \in \mathcal{A}_{2n}$ (respectively $w_{i} \in \mathcal{C}_{n}$), the path $\pi_{w_{i}}: [0,1] \rightarrow \mathfrak{h}_{\mathbb{R}}^{*}$ (respectively $\pi_{w_{i}}: [0,1] \rightarrow (\mathfrak{h}_{\mathbb{R}}^{\sigma})^{*})$ is given by $t \mapsto t\e_{w_{i}}$, where we define $\e_{\overline{l}}:= -\e_{l}$ for $1 \leq l \leq n$. Also, in general, for paths $\pi_{1}, \cdots, \pi_{k}:[0,1] \rightarrow \mathfrak{h}_{\mathbb{R}}^{*}$, we have 

\begin{align*}
\op{res}(\pi_{1}*\cdots * \pi_{k}) = \op{res}(\pi_{1})* \cdots * \op{res}(\pi_{k}).
\end{align*}

\noindent
Set ${\tightoverset{\hbox{\textasciicircum}}{\e}}_{i}= \op{res}(\e_{i}) \hbox{ for } i \in \{1, \cdots, 2n\}.$ Then, for $i \in \{1, \cdots, 2n\}$ and $j \in \{1, \cdots, n\}$ we have

\[
    {\tightoverset{\hbox{\textasciicircum}}{\e}}_{i}({\tightoverset{\hbox{\textasciicircum}}{h}}_{j}) =  
\begin{cases}
    1& \text{ if } i \in \{j, 2n-j\}\\
    -1& \text{ if }  i \in \{j+1, 2n-j+1\}\\
    0& \text{ otherwise. }
\end{cases}
\]
\noindent
Therefore ${\tightoverset{\hbox{\textasciicircum}}{\e}}_{i} = - {\tightoverset{\hbox{\textasciicircum}}{\e}}_{2n - i +1}$, which means we can describe $\op{res}(\pi_{w})$ in the following simple way: First obtain from $w$ a word $\op{res}(w)$ in the alphabet $\mathcal{C}_{n}$ by replacing a letter $w_{i}$ in $w$ with $\overline{2n - w_{i} +1}$ if  $n< w_{i} \leq 2n$. All other letters stay the same. This all then implies that $\op{res}(\pi_{w}) = \pi_{\op{res}(w)}.$

\begin{ex}
\label{firstexampleofrestrictedpath}
Let $n = 2$ and $w = 121223341$. Then $\op{res}(w) = 12122\bar{2}\bar{2}\bar{1}1$. 

\end{ex}

\section{The Naito-Sagaki conjecture}

Let $\lambda \in \p^{+}_{A_{2n-1}}$ be dominant and let $\li(\lambda)$ be the associated simple module for $\mathfrak{sl}(2n,\mathbb{C})$. Recall the set  $\Gamma(\underline{d}_{\lambda})^{\op{SSYT}}$ of semi-standard Young tableaux of shape $\underline{d}_{\lambda}$. Let

\begin{align*}
\op{domres}(\lambda) = \{\delta \in \Gamma(\underline{d}_{\lambda})^{\op{SSYT}}: \op{res}(\pi_{w(\delta)}) \text{ is dominant}\}
\end{align*}

\noindent
be the subset of   $\Gamma(\underline{d}_{\lambda})^{\op{SSYT}}$, elements of which are those semi-standard Young tableaux whose words are dominant under restriction (considered as paths), and for $\nu \leq \lambda$, let 

\begin{align*}
\op{domres}(\lambda, \nu) = \{\delta \in \op{domres}(\lambda): \delta (1) = \nu\}.
\end{align*}

\begin{ex}
\label{domresexample}
Let $n = 2$ and $\lambda = \omega_{1} + \omega_{2}$. Then

\begin{align*}
\op{domres}(\lambda) &= \left\{\Skew(0:\mbox{\tiny{1}},\mbox{\tiny{1}}|0:\mbox{\tiny{2}}), \Skew(0:\mbox{\tiny{1}},\mbox{\tiny{1}}|0:\mbox{\tiny{4}})\right\}\\
\op{domres}(\lambda, \lambda) &= \left\{\Skew(0:\mbox{\tiny{1}},\mbox{\tiny{1}}|0:\mbox{\tiny{2}})\right\}\\
\op{domres}(\lambda, \omega_{1}) &= \left\{ \Skew(0:\mbox{\tiny{1}},\mbox{\tiny{1}}|0:\mbox{\tiny{4}})\right\}.
\end{align*}
\end{ex}

\begin{conj}\cite{branchconj}
\label{naitosagakiconjecture}
Let $\lambda \in \p_{A_{2n-1}}^{+}$ be dominant, and let $\li(\lambda)$ be the associated simple module for 
$\mathfrak{sl}(2n, \mathbb{C})$. Then 

\begin{align*}
\op{res}^{\mathfrak{g}}_{\mathfrak{g}^{\sigma}} (\li(\lambda)) = \underset{\delta \in \op{domres}(\lambda)}{\bigoplus} \li(\tightoverset{\hbox{\textasciicircum}}{\delta(1)}).
\end{align*}

\end{conj}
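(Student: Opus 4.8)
The plan is to reduce the Naito--Sagaki branching rule to the classical Littlewood--Richardson-type rule for the restriction $\mathfrak{sl}(2n,\mathbb{C}) \downarrow \mathfrak{sp}(2n,\mathbb{C})$, which is governed by Sundaram's combinatorial model: the multiplicity of $\li(\tightoverset{\hbox{\textasciicircum}}{\nu})$ in $\op{res}(\li(\lambda))$ equals the number of semi-standard Young tableaux of a certain shape obtained from $\lambda$ and $\nu$ whose column-reading words are Yamanouchi with respect to the conjugation of the $\mathcal{C}_n$ alphabet, together with a parity/admissibility condition on the rows (the ``Littlewood--Richardson Sundaram tableaux'' of the title). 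Concretely, I would first establish the identity $\op{res}(\pi_{w(\delta)}) = \pi_{\op{res}(w(\delta))}$ at the level of paths using the computation of ${\tightoverset{\hbox{\textasciicircum}}{\e}}_{i}$ already recorded in Section 3, so that dominance of the restricted path is a purely word-combinatorial condition on $\op{res}(w(\delta))$ in the alphabet $\mathcal{C}_n$. Then I would set up an explicit bijection between $\op{domres}(\lambda,\nu)$ and the set of Littlewood--Richardson Sundaram tableaux counting the multiplicity of $\li(\tightoverset{\hbox{\textasciicircum}}{\nu})$, proving that $\#\op{domres}(\lambda,\nu)$ equals that multiplicity; summing over $\nu$ then yields the conjectured decomposition.

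The bijection itself I would build in two stages. First, given $\delta \in \Gamma(\underline{d}_\lambda)^{\op{SSYT}}$ with $\op{res}(\pi_{w(\delta)})$ dominant, apply Schützenberger's jeu de taquin / the RSK-type insertion adapted to the symplectic setting (Sundaram's insertion) to the word $\op{res}(w(\delta))$; dominance of the path guarantees that the insertion tableau is the Yamanouchi representative, so the recording data lands in the correct indexing set. Second, I would check that this map is invertible by reading off, from a Littlewood--Richardson Sundaram tableau, a unique preimage in $\op{domres}(\lambda,\nu)$ — here the key point is that the letters $> n$ appearing in $w(\delta)$ are forced, via the relation ${\tightoverset{\hbox{\textasciicircum}}{\e}}_{i} = -{\tightoverset{\hbox{\textasciicircum}}{\e}}_{2n-i+1}$, so the ``barred'' part of the symplectic word reconstructs the upper rows of $\delta$. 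Because the general statement of such a bijection is exactly what is open, I would carry this out only for the two cases in the main result: for $n=2$ directly (the tableaux have at most $3$ rows and very restricted shapes, so a finite case analysis combined with explicit generating-function identities for $\mathfrak{sp}(4)$ suffices), and for $\lambda$ in the span of $\omega_1,\omega_2,\omega_3$ with $n \geq 3$, where $\underline{d}_\lambda$ has at most three columns of heights $\leq 3$ and the restricted words have a controlled structure.

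For the $\omega_1,\omega_2,\omega_3$ case I would exploit that $\li(\lambda)$ with $\lambda = a_1\omega_1 + a_2\omega_2 + a_3\omega_3$ decomposes, under tensor-product considerations, into pieces whose restriction to $\mathfrak{sp}(2n)$ is already understood from the hook and rectangular cases proved in \cite{branchconj}; more precisely, I would use the known Littlewood-type restriction formula $\op{res}(\li(\lambda)) = \bigoplus_{\mu} \big(\sum_{\beta \in (2\mathbb{Z}_{\geq 0})^{*}} c^{\lambda}_{\mu\,\beta'}\big)\, \li(\tightoverset{\hbox{\textasciicircum}}{\mu})$ (sum over partitions with even columns) and match its right-hand side term-by-term with $\#\op{domres}(\lambda,\mu)$. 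Verifying this match amounts to a Littlewood--Richardson coefficient identity in at most three rows, which can be handled by the Littlewood--Richardson rule together with the Bender--Knuth-type involutions that realize the even-column condition. I expect the main obstacle to be precisely the invertibility and well-definedness of the insertion map on the nose — in particular showing that dominance of $\op{res}(\pi_{w(\delta)})$ as a path (a prefix condition on every initial segment of the word) is equivalent to the Yamanouchi/lattice-word condition after symplectic insertion, since the symplectic jeu de taquin does not preserve prefixes as cleanly as in type $A$; controlling the interaction between the folding-induced sign flips ${\tightoverset{\hbox{\textasciicircum}}{\e}}_{i} = -{\tightoverset{\hbox{\textasciicircum}}{\e}}_{2n-i+1}$ and the symplectic cancellation rules is the delicate heart of the argument, and is also why the fully general case remains conjectural.
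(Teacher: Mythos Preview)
Your high-level strategy coincides with the paper's: invoke Sundaram's branching theorem and then match $\op{domres}(\lambda,\nu)$ bijectively with the appropriate union of Littlewood--Richardson (Sundaram) tableau sets. The execution, however, diverges sharply and carries real gaps. First, a factual slip: for $\lambda=a_1\omega_1+a_2\omega_2+a_3\omega_3$ the shape $\underline{d}_\lambda$ has $a_1+a_2+a_3$ columns, each of height at most~$3$ --- not ``at most three columns of heights $\leq 3$.'' This matters because the paper's bijection exploits exactly this: every column of $\op{res}(\mathscr{T})$ for $\mathscr{T}\in\op{domres}(\lambda,\nu)$ belongs to a short explicit list, and the map $\varphi$ records only the multiplicities $m_1,m_2,m_3,m_4$ of the non-standard column types, writes down the target LR tableau directly from these integers, and proves invertibility by solving a $3\times 3$ linear system (Claim~\ref{systemofeqnssolis}). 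There is no insertion, no jeu de taquin, no Bender--Knuth involution, and no appeal to the hook or rectangular cases of \cite{branchconj}.

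Second, and more seriously, your proposal does not cover $n=2$. The Littlewood formula you quote (sum over even-column $\beta$ of $c^{\lambda}_{\mu\,\beta'}$) holds only in the stable range $l(\underline{d}_\lambda)\leq n$; for $n=2$ a three-row $\lambda$ is not stable, and one needs Sundaram's coefficients $c^{\lambda}_{\nu,\eta}(\s)$. The paper closes this gap with a dedicated lemma (Lemma~\ref{easylemma}) showing that for these particular shapes the Sundaram condition is automatic once $\underline{d}_\eta$ is even, whence $c^{\lambda}_{\nu,\eta}(\s)=c^{\lambda}_{\nu,\eta}$; you neither state nor prove this, and ``explicit generating-function identities for $\mathfrak{sp}(4)$'' is not a substitute. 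Finally, the obstacle you yourself flag --- that symplectic insertion does not respect prefix-dominance cleanly --- is precisely why the paper avoids insertion altogether; building the bijection on Sundaram's insertion reintroduces the very difficulty that keeps the general conjecture open, and you give no argument for why it collapses in the three-row case.
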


\begin{thm}[\cite{branchconj}]
\label{existingcases}
Conjecture \ref{naitosagakiconjecture} is true for $\lambda =  a\omega_{1} + \omega_{k}$ and $\lambda = a\omega_{k}$, $a \in \mathbb{Z}_{\geq 0}$.
\end{thm}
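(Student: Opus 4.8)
The plan is to compare the combinatorial set $\op{domres}(\lambda)$ with the known branching multiplicities for $\mathfrak{sp}(2n,\mathbb{C}) \subset \mathfrak{sl}(2n,\mathbb{C})$, which are governed by Littlewood--Richardson-type coefficients involving partitions with even (respectively arbitrary) column lengths. Concretely, for $\lambda$ of hook shape $a\omega_1 + \omega_k$ or rectangular-column shape $a\omega_k$, the multiplicity of $\li(\widehat{\mu})$ in $\op{res}^{\mathfrak{g}}_{\mathfrak{g}^\sigma}(\li(\lambda))$ is a known, small number (frequently $0$ or $1$), computed via the classical symplectic branching rule (Littlewood, refined by King and by Sundaram). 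So the theorem reduces to the identity
\begin{align*}
\# \op{domres}(\lambda,\nu) = \big[\op{res}^{\mathfrak{g}}_{\mathfrak{g}^\sigma}(\li(\lambda)) : \li(\widehat{\nu})\big]
\end{align*}
for all dominant $\nu$, and I would prove this by explicitly enumerating $\op{domres}(\lambda,\nu)$.

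First I would set up the translation from tableaux to paths precisely in these shapes: a semi-standard Young tableau $\delta$ of hook shape has a single long row (length $a+1$) sitting on top of a single extra box below the leftmost column, so its word $w(\delta)$ is very short and structured, and $\op{res}(w(\delta))$ is obtained by the bar-replacement rule from Section 3. Second, I would write down the dominance condition for $\op{res}(\pi_{w(\delta)})$ explicitly: a staircase path built from unit steps $\pm\widehat{\e}_i$ is dominant iff every initial subword, when its letters are tallied with the $C_n$ sign convention, yields a weight in the dominant chamber of $C_n$ — i.e. the partial sums of $\widehat{\e}$-multiplicities are weakly decreasing in the appropriate sense, together with a positivity/crossing condition coming from the barred letters. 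This is a finite and checkable lattice-path condition. Third, I would enumerate the semi-standard fillings satisfying it: for the hook case this is essentially choosing how many $i$'s versus $\bar\imath$'s appear after restriction subject to the semistandardness of the original (unbarred) filling and the dominance of every prefix; the count should come out to match King's or Sundaram's symplectic branching multiplicities termwise. The rectangular case $a\omega_k$ is handled the same way but with $k$ equal columns, where the column-strictness forces each column to be a strictly increasing $k$-subset of $\mathcal{A}_{2n}$, and the dominance-under-restriction condition on the concatenated word again cuts the set down to exactly the LR count.

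The main obstacle I expect is the third step: showing that the path-dominance condition carves out precisely the tableaux counted by the classical symplectic branching rule, rather than a super- or subset. The difficulty is that the Naito--Sagaki condition is imposed on \emph{every} prefix of the row-reading word (a very rigid condition), whereas the classical rule is phrased via lattice-word/jeu-de-taquin conditions on Sundaram-type tableaux; bridging these two requires either a weight-preserving bijection or a careful prefix-by-prefix induction on the letters, and in the original paper \cite{branchconj} this is exactly where the arguments become ad hoc and shape-dependent. I would attempt a uniform argument by induction on $a$ (peeling off the rightmost box/column, which removes a controlled set of letters from the word and shifts the dominance inequalities predictably), reducing $a\omega_1+\omega_k$ and $a\omega_k$ to the base cases $\omega_1+\omega_k$ and $\omega_k$ that can be checked by hand; but I anticipate that making the induction step respect both semistandardness and prefix-dominance simultaneously is the delicate point, and it may well be unavoidable to treat the hook and rectangular families with slightly different bookkeeping even if the underlying idea is the same.
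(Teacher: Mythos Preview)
The present paper does not contain a proof of this statement: Theorem~\ref{existingcases} is quoted from \cite{branchconj} with no argument supplied here, so there is nothing in the paper to compare your proposal against. What the Introduction does say is that the original proofs in \cite{branchconj} ``used various methods, which were particular to each of the cases'', and your plan --- explicit enumeration of $\op{domres}(\lambda,\nu)$ and direct comparison with the classical symplectic branching multiplicities, handled separately for the hook and rectangular families --- is in the same spirit.

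Two small corrections to your shape descriptions, which would matter if you carried the plan out. For $\lambda = a\omega_1 + \omega_k$ the arrangement is a hook with arm of length $a$ and a left column of length $k$, i.e.\ $k-1$ boxes below the top-left box, not ``a single extra box'' (your description is the special case $k=2$). For $\lambda = a\omega_k$ the rectangle has $a$ columns each of height $k$, not ``$k$ equal columns''; so in your third step the filling consists of $a$ strictly increasing $k$-subsets of $\mathcal{A}_{2n}$, read right to left.

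Beyond that, your identification of the crux --- showing that prefix-dominance of the restricted word cuts out exactly the tableaux counted by the Sundaram/King rule --- is accurate, and your candid acknowledgement that the hook and rectangular cases may require separate bookkeeping matches what is reported about \cite{branchconj}. Since the paper at hand offers no proof to benchmark, your outline stands as a reasonable sketch rather than something that can be judged equivalent to or divergent from the paper's own argument.
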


\begin{ex}
\label{firstexamplebranch}
Let $n = 2$ and $\lambda = \omega_{1} + \omega_{2}$ as in Example \ref{domresexample}. Then 

\begin{align*}
\op{res}^{\mathfrak{g}}_{\mathfrak{g}^{\sigma}}(\li(\lambda)) = \li({\tightoverset{\hbox{\textasciicircum}}{\omega}}_{1}) \oplus \li({\tightoverset{\hbox{\textasciicircum}}{\omega}}_{1} + {\tightoverset{\hbox{\textasciicircum}}{\omega}}_{2}).
\end{align*}
\end{ex}
\begin{rem}
Conjecture \ref{naitosagakiconjecture} is stated in \cite{branchconj} for $\li(\lambda)$ a representation of $\mathfrak{gl}(2n, \mathbb{C})$ for $\lambda$ non-negative and dominant. However, the representation of $\mathfrak{gl}(2n, \mathbb{C})$ induced by an irreducible representation of $\mathfrak{sl}(2n, \mathbb{C})$ has the same highest weight and restricts back to itself. See \S 15.3 in \cite{fultonharris}.
\end{rem}


\section{Littlewood-Richardson tableaux and n-symplectic Sundaram tableaux: branching}
\label{skewsection}
\begin{defn}
Let $\lambda, \nu \in \p_{A_{2n-1}}$ be two dominant weights such that $\underline{d}_{\nu} \subset \underline{d}_{\lambda}$ (this means that one shape is contained in the other when aligned with respect to their top left corners. In Example \ref{firstexamplensymplectic} below we see that $\underline{d}_{\nu} \subset \underline{d}_{\lambda}$). A tableau $\mathscr{T}$ of skew shape $\lambda / \nu$ is a filling of an arrangement of boxes of shape $\lambda$ leaving the boxes that belong to $\nu \subset \lambda$ blank, with the others having entries in the alphabet $\mathcal{A}_{2n}$, and such that these entries are strictly increasing along the columns. The word $w(\mathscr{T})$ of $\mathscr{T}$ is obtained just as for semi-standard Young tableaux, reading from right to left and from top to bottom, ignoring the blank boxes. 
\end{defn}

A shape $\underline{d} = (d_{1}, \cdots, d_{k})$ is \textbf{even} if $d_{i} = 0$ unless $i$ is an even number, for $i \in \{1, \cdots, k\}$.  Also, for a shape $\underline{d}$ define $l(\underline{d})$  to be the length of the longest column of the associated arrangement of boxes. 


\begin{defn}
Let $\lambda , \nu, \eta \in \p^{+}_{A_{2n-1}}$ be dominant weights such that the shapes $\underline{d}_{\nu}$ and $\underline{d}_{\eta}$ are contained in the shape $\underline{d}_{\lambda}$ of $\lambda$, and such that $\underline{d}_{\eta}$ is even. A \textbf{Littlewood-Richardson} (respectively \textbf{n-symplectic Sundaram} or just \textbf{Sundaram}) tableau of skew shape $\lambda / \nu$ and weight 
$\eta$ is a tableau of skew shape $\lambda / \nu$ that is semi-standard, and has a dominant word of weight $\eta$ (respectively $2i+1 $ does not appear strictly below row  $n+i$ for $i \in \{0, 1, \cdots, \frac{1}{2} l(\underline{d}_{\eta})\}$). Here a word $w \in \mathcal{W}(\mathcal{A}_{2n})$ is dominant if the path $\pi_{w}$ is dominant. We will denote them by $\op{LR}(\lambda/ \nu, \eta)$   (respectively $(\op{LRS}(\lambda/\nu, \eta)$.)
\end{defn}

\begin{rem}
\label{redundance1}
Note that if $l(\lambda) \leq n$ (such weights are called \textbf{stable}) then $\op{LRS}(\lambda/\nu, \eta) = \op{LR}(\lambda/\nu, \eta)$. 
\end{rem}

\begin{rem}
\label{redundance2}
If $\lambda$ is stable and $\mathscr{T}$ is a Littlewood-Richardson tableau of skew shape $\lambda / \nu$ then its entries belong to the set $\{1, \cdots, n\}$. This is because if, say, $k$ appears in row $l_{k}$ of $\mathscr{T}$, then, since the word of $\mathscr{T}$ is dominant, a $k-1$ must appear either directly above $k$ in the same column, or in a column to the right, and since $\mathscr{T}$ is semi-standard, it appears in at most row $l_{k} - 1$.
\end{rem}

\begin{ex}
\label{firstexamplensymplectic}
The tableau $\mathscr{L} = \Skew(0:\mbox{},\mbox{\tiny{1}},\mbox{\tiny{1}}|0:\mbox{ },\mbox{\tiny{2}}|0:\mbox{\tiny{2}})$ is  a Littlewood-Richardson tableau of skew shape $\lambda / \nu$ and weight $\eta$ for $\lambda = \omega_{1} + \omega_{2} + \omega_{3}, \nu = \omega_{2},$ and $\eta = 2 \omega_{2}$ and the tableau $\mathscr{T} = \Skew(0: \mbox{ }|0: \mbox{ } |0: \mbox{\tiny{1}})$ is a Littlewood-Richardson tableau of skew shape $\lambda'/ \nu'$ and weight $\eta'$ for $\lambda' = \omega_{3}, \nu' = \omega_{2},$ and $\eta' = \omega_{1}$. Notice that  $\mathscr{L}$ is 2-symplectic Sundaram while $\mathscr{T}$ is not. 
\end{ex}

\begin{defn}
The Littlewood-Richardson coefficient is defined as the number $c^{\lambda}_{\nu, \eta} \in \mathbb{Z}_{\geq 0}$ such that
\begin{align*}
\li(\nu) \otimes \li(\eta) = \underset{\nu \leq \lambda}{\bigoplus} c^{\lambda}_{\nu, \eta} \li(\lambda)
\end{align*}
\noindent
where $\li(\lambda), \li(\nu),$ and $\li(\eta)$ are all representations of $\mathfrak{sl}(2n, \mathbb{C})$. 
\end{defn}

\noindent
Theorem \ref{litrich} below is known as the \textbf{Littlewood-Richardson rule}. It was first stated in 1943 by Littlewood and Richardson. 

\begin{thm}\cite{howelee}
\label{litrich}
The Littlewood-Richardson coefficients are obtained by counting Littlewood-Richardson tableaux: 
\begin{align*}
c^{\lambda}_{\nu, \eta} = |\op{LR}(\lambda/ \nu, \eta)|.
\end{align*}
\end{thm}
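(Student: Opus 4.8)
The plan is to pass to characters and reduce the statement to a combinatorial identity for skew Schur functions, which I would then prove via the plactic monoid. Taking formal characters identifies the representation ring with the ring of symmetric functions (working with $GL(2n,\mathbb{C})$ as in the Remark following Example \ref{firstexamplebranch}, or by stabilizing in the number of variables), sending $[\li(\mu)]$ to the Schur function $s_\mu$; combinatorially $s_\mu = \sum_{T \in \Gamma(\underline{d}_\mu)^{\op{SSYT}}} x^{\op{content}(T)}$. Under this identification the defining relation of $c^\lambda_{\nu,\eta}$ becomes $s_\nu s_\eta = \sum_\lambda c^\lambda_{\nu,\eta}\, s_\lambda$, and by the adjunction between multiplication and skewing for the Hall inner product (for which the $s_\mu$ are orthonormal) this is equivalent to the skew expansion $s_{\lambda/\nu} = \sum_\eta c^\lambda_{\nu,\eta}\, s_\eta$. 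Taking as input the combinatorial formula $s_{\lambda/\nu} = \sum_S x^{\op{content}(S)}$, summed over semistandard skew tableaux $S$ of shape $\lambda/\nu$ (via Jacobi--Trudi, or directly), it then suffices to show that the coefficient of $s_\eta$ equals $|\op{LR}(\lambda/\nu,\eta)|$.

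The engine is Knuth equivalence and jeu de taquin. I would first establish the two foundational facts: for a skew tableau $S$ the rectification $\op{rect}(S)$ is a well-defined semistandard tableau of straight shape, independent of the sequence of slides, with $w(S)$ Knuth-equivalent to $w(\op{rect}(S))$; and rectification preserves content. The decisive consequence, and the real obstacle, is the shape-independence of the rectification count: for a fixed semistandard tableau $T_0$ of straight shape $\mu$, the number $N(\lambda/\nu, T_0) = \#\{S : S \text{ of shape } \lambda/\nu,\ \op{rect}(S) = T_0\}$ depends only on $\mu$. This follows from the confluence of jeu de taquin together with the reversibility of slides, which lets one transport fillings of $\lambda/\nu$ rectifying to $T_0$ bijectively onto those rectifying to any $T_0'$ of the same shape.

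Granting shape-independence, write $N_\mu$ for the common value. Partitioning the skew tableaux in $s_{\lambda/\nu}$ according to $\op{rect}(S)$ and using $\op{content}(S) = \op{content}(\op{rect}(S))$, the tableaux rectifying to a fixed $T_0$ of shape $\mu$ contribute $N_\mu\, x^{\op{content}(T_0)}$, and summing over all $T_0$ of shape $\mu$ gives $N_\mu\, s_\mu$. Hence $s_{\lambda/\nu} = \sum_\mu N_\mu\, s_\mu$, so $c^\lambda_{\nu,\eta} = N_\eta$. To finish, specialize $T_0$ to the tableau $U_\eta$ of shape $\eta$ whose $i$-th row is filled with the letter $i$: a word rectifies to $U_\eta$ exactly when it is a lattice word of content $\eta$, which in the reading convention of Section 2 is precisely the requirement that it be dominant of weight $\eta$. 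Thus $\op{rect}(S) = U_\eta$ if and only if $S \in \op{LR}(\lambda/\nu,\eta)$, and therefore $c^\lambda_{\nu,\eta} = N_\eta = |\op{LR}(\lambda/\nu,\eta)|$.

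The main difficulty is concentrated in the shape-independence of the rectification count, i.e.\ in the well-definedness and confluence of jeu de taquin, where the Knuth relations carry the content; the remaining steps are bookkeeping. I note an alternative more in the spirit of the present paper: Littelmann's concatenation theorem realizes the multiplicity of $\li(\lambda)$ in $\li(\nu) \otimes \li(\eta)$ as the number of paths $\pi \in \p_{SSYT}(\eta)$ for which $\pi^\nu * \pi$ is dominant with endpoint $\lambda$ --- equivalently, the number of semistandard Young tableaux of shape $\eta$ and content $\lambda - \nu$ obeying a $\nu$-shifted dominance condition. This is the Littlewood--Richardson count in dual form, and identifying it with the skew tableaux of $\op{LR}(\lambda/\nu,\eta)$ again rests on the symmetry $c^\lambda_{\nu,\eta} = c^\lambda_{\eta,\nu}$ and the same tableau-switching machinery. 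Either route reduces to recognizing dominant (Yamanouchi) words among the readings of fillings of the relevant shape.
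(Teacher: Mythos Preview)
The paper does not supply its own proof of Theorem~\ref{litrich}: the Littlewood--Richardson rule is simply quoted as a known result, with a citation to \cite{howelee}, and is then used as a black box (for instance in Remark~\ref{redundance} and in the reduction underlying Proposition~\ref{theclaim!}). There is therefore no argument in the paper to compare your proposal against.

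That said, your outline is the standard jeu-de-taquin/plactic-monoid proof, essentially the Sch\"utzenberger argument as one finds it in Fulton's \emph{Young Tableaux}. The reduction from the tensor-product definition to the skew expansion $s_{\lambda/\nu}=\sum_\eta c^\lambda_{\nu,\eta}s_\eta$, the use of rectification, and the specialization to the superstandard tableau $U_\eta$ are all correct, and you have accurately isolated the one genuinely nontrivial step, namely the shape-independence of the rectification count (equivalently, confluence of jeu de taquin). Your identification of ``dominant word of weight $\eta$'' in the paper's right-to-left, top-to-bottom reading with the Yamanouchi condition is also correct. The alternative route you sketch via Littelmann's concatenation theorem is indeed closer in spirit to the path-model language of the paper and would work equally well. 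In short: your proposal is a sound proof strategy; it simply goes far beyond what the paper does here, which is to invoke the result without proof.
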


\begin{rem}
\label{redundance}
Theorem \ref{litrich} implies that $c^{\lambda}_{\nu, \eta} = c^{\lambda}_{\eta, \nu}$. 
\end{rem}

We will use the notation $c^{\lambda}_{\nu, \eta}(\s) = |\op{LRS(\lambda/ \nu, \eta)}|$. The following theorem was proven by Sundaram in Chapter IV of her PhD thesis \cite{sundaram}. See also Corollary 3.2 of \cite{sundarampaper}. For stable weights it was proven by Littlewood in \cite{litt} and is known as the Littlewood branching rule. 


\begin{thm} \cite{sundaram}
\label{sundaramstheorem}
Let $\lambda \in \p_{A_{2n-1}}$ be dominant. Then 

\begin{align*}
\op{res}^{\mathfrak{g}}_{\mathfrak{g}^{\sigma}}(\li(\lambda)) = \underset{l(\underline{d}_{\nu}) \leq n}{\underset{\underline{d}_{\nu} \subset \underline{d}_{\lambda}}{\bigoplus}} \N_{\lambda, \nu} \li(\nu)
\end{align*}

\noindent
where 
\begin{align*}
\N_{\lambda, \nu} = \underset{\underline{d}_{\eta} \hbox{ \tiny{even} }}{\sum}c^{\lambda}_{\nu, \eta}(\s). 
\end{align*}
\end{thm}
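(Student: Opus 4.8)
The plan is to pass from representations to characters, to settle the stable range by a classical symmetric-function computation, and then to deduce the general case from the stable one by a cancellation argument engineered so that the $n$-symplectic Sundaram tableaux are precisely what survives. Since $\op{res}^{\mathfrak g}_{\mathfrak g^{\sigma}}\li(\lambda)$ is completely reducible and the $\li(\nu)$ with $l(\underline d_{\nu})\le n$ are pairwise non-isomorphic, the theorem is equivalent to the character identity
\begin{align*}
\op{ch}\,\op{res}^{\mathfrak g}_{\mathfrak g^{\sigma}}\li(\lambda)=\sum_{\substack{\underline d_{\nu}\subset\underline d_{\lambda}\\ l(\underline d_{\nu})\le n}}\N_{\lambda,\nu}\,\op{ch}\,\li(\nu).
\end{align*}
By Section 3 we have $\widehat{\e}_{i}=-\widehat{\e}_{2n-i+1}$, so setting $x_{i}=e^{\widehat{\e}_{i}}$ for $1\le i\le n$ gives $e^{\widehat{\e}_{2n-i+1}}=x_{i}^{-1}$; hence the left-hand side is the Schur polynomial $s_{\lambda}$ evaluated at the doubled alphabet $(x_{1},\dots,x_{n},x_{1}^{-1},\dots,x_{n}^{-1})$, while $\op{ch}\li(\nu)$ is the symplectic Schur function $sp_{\nu}(x_{1},\dots,x_{n})$. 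So it suffices to prove $s_{\lambda}(x,x^{-1})=\sum_{\nu}\N_{\lambda,\nu}\,sp_{\nu}(x)$ in the ring of Weyl-group-invariant Laurent polynomials in $x_{1},\dots,x_{n}$, with $\N_{\lambda,\nu}=\sum_{\underline d_{\eta}\text{ even}}c^{\lambda}_{\nu,\eta}(\s)$.

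If $\lambda$ is stable, i.e. $l(\underline d_{\lambda})\le n$, then $\op{LRS}=\op{LR}$ by Remark \ref{redundance1}, so $\N_{\lambda,\nu}=\sum_{\underline d_{\eta}\text{ even}}c^{\lambda}_{\nu,\eta}$, and the required identity is Littlewood's branching rule. I would prove it (for a self-contained argument) from the Littlewood identity $\sum_{\underline d_{\eta}\text{ even}}s_{\eta}(x)=\prod_{i<j}(1-x_{i}x_{j})^{-1}$, together with the (symplectic) Cauchy identity on the doubled alphabet and the Weyl/Jacobi--Trudi determinant formula for $sp_{\nu}$: expanding both sides of the resulting identity in the Schur basis and reading off the coefficient of $sp_{\nu}$ via the Littlewood--Richardson rule (Theorem \ref{litrich}) produces exactly $\sum_{\underline d_{\eta}\text{ even}}c^{\lambda}_{\nu,\eta}$. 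Note $l(\underline d_{\nu})\le n$ is then automatic.

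For general $\lambda$ the quantity $\sum_{\underline d_{\eta}\text{ even}}c^{\lambda}_{\nu,\eta}$ over-counts once $l(\underline d_{\lambda})>n$, and the $n$-symplectic condition must absorb the excess. The plan is: first run the same symmetric-function manipulation as above, now interpreted formally in the ring of universal characters, to obtain $s_{\lambda}(x,x^{-1})=\sum_{\text{all }\nu}\big(\sum_{\underline d_{\eta}\text{ even}}c^{\lambda}_{\nu,\eta}\big)sp^{\mathrm{univ}}_{\nu}(x)$, summed over all partitions $\nu$, where $sp^{\mathrm{univ}}_{\nu}$ is the universal symplectic character specialised to $n$ variables; then invoke the symplectic modification rule (King; Koike--Terada), by which $sp^{\mathrm{univ}}_{\nu}$ equals $0$ or $\pm sp_{\mu}$ for an explicitly determined $\mu$ with $l(\underline d_{\mu})\le n$ whenever $l(\underline d_{\nu})>n$; and finally show that the net multiplicity of each genuine $sp_{\nu}$ after all cancellations equals $\sum_{\underline d_{\eta}\text{ even}}|\op{LRS}(\lambda/\nu,\eta)|$. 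The heart of the matter, and the step I expect to be the real obstacle, is this last point: one needs a shape- and weight-preserving, sign-reversing involution on those Littlewood--Richardson tableaux of skew shape $\lambda/\nu$ and even weight $\eta$ that violate $n$-symplecticity --- locate the first $2i+1$ lying strictly below row $n+i$, and modify the tableau there, using $\widehat{\e}_{i}=-\widehat{\e}_{2n-i+1}$ to pair it with a tableau of a different even weight --- and one must check that it is genuinely an involution, that the procedure terminates, and that its fixed points are precisely the $n$-symplectic Sundaram tableaux. This is exactly the combinatorial core of Sundaram's original proof, which she carries out via Berele's symplectic insertion and jeu de taquin; a possibly cleaner alternative would be to derive the whole identity from a seesaw/Howe-duality pair acting on $\bigwedge(\mathbb{C}^{2n}\otimes\mathbb{C}^{m})$, which repackages the modification rule as an honest decomposition of a Fock space, at the price of heavier setup. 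By comparison, the reduction to characters and the stable case are routine.
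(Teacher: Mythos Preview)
The paper does not prove Theorem~\ref{sundaramstheorem}; it is stated with a citation to Sundaram's thesis~\cite{sundaram} (and to~\cite{sundarampaper}), with the remark that the stable case is Littlewood's classical branching rule~\cite{litt}. There is therefore no proof in the paper to compare your proposal against.

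Your outline is a reasonable sketch of the standard route and, as you yourself note, is essentially the structure of Sundaram's original argument: reduce to characters, settle the stable range via the Littlewood identity and the Littlewood--Richardson rule, and then handle $l(\underline d_{\lambda})>n$ by a cancellation/involution that singles out the $n$-symplectic tableaux. Two cautions. First, the proposal is explicitly a plan rather than a proof: the decisive step---constructing a sign-reversing involution whose fixed points are exactly $\op{LRS}(\lambda/\nu,\eta)$ and checking it interacts correctly with the symplectic modification rule---is asserted but not carried out, and this is where all the difficulty lies (Sundaram's version uses Berele insertion and is not short). Second, your suggested local move ``pair the offending $2i{+}1$ with a tableau of a different even weight via $\widehat\e_{i}=-\widehat\e_{2n-i+1}$'' is too vague to be checked; the actual involution in~\cite{sundaram} does not arise from such a one-box swap, and a naive implementation will not preserve semi-standardness or the lattice-word condition. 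If you want a self-contained write-up, either follow Sundaram's bijective argument in full or take the universal-character/modification-rule route (Koike--Terada, King) and carry the bookkeeping through explicitly.
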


\section{Proof of the Naito-Sagaki conjecture for $n = 2$ and $\lambda = a_{1}\omega_{1}+a_{2}\omega_{2}+a_{3}\omega_{3}$.}

As its title suggests, in this section we give a proof of Conjecture \ref{naitosagakiconjecture} in the cases  $n = 2$ and $\lambda = a_{1}\omega_{1}a_{2}+\omega_{2}+a_{3}\omega_{3}$, for all $n$. We will do so using Theorem \ref{sundaramstheorem} from  Section \ref{skewsection}. The following construction should provide some insight. Given a tableau $\mathscr{T} \in \op{domres}(\lambda)$ we will construct a weight $\eta_{\mathscr{T}}$ with even shape $\underline{d}_{\eta_{\mathscr{T}}}$. To do this, first replace, in $\mathscr{T}$, all letters $w$ such that $n< w \leq 2n$ by $\overline{2n-w+1}$, and denote the resulting symplectic semi-standard tableau by $\op{res}(\mathscr{T})$. Its word $w(\op{res}(\mathscr{T}))$ is equal to the restricted word $\op{res}(w(\mathscr{T}))$. Now, in each column of $\op{res}(\mathscr{T})$, replace an entry $w$ by a blank square if $\overline{w}$ appears to its left in the word  $\op{res}(w(\mathscr{T})) = w(\op{res}(\mathscr{T}))$. In that case, replace the entry $\overline{w}$ by a blank square as well. Count the number of blank squares in each column, and order these squares to obtain an arrangement of boxes of  shape $\underline{d}_{\eta_{\mathscr{T}}}$.

\begin{ex}
\label{exevenshape}
Let $n = 2$ and 
\begin{align*}
\mathscr{T} = \Skew(0:\mbox{\tiny{1}},\mbox{\tiny{1}},\mbox{\tiny{1}}|0:\mbox{\tiny{2}},\mbox{\tiny{2}}|0:\mbox{\tiny{3}}).
\end{align*} 

Then 
\begin{align*}
\op{res}(\mathscr{T}) = \Skew(0:\mbox{\tiny{1}},\mbox{\tiny{1}},\mbox{\tiny{1}}|0:\mbox{\tiny{2}},\mbox{\tiny{2}}|0:\mbox{\tiny{$\overline{2}$}}),\mbox{      } \eta_{\mathscr{T}} = \omega_{2}, \hbox{ and } \underline{d}_{\eta_{\mathscr{T}}} = (0,1)
\end{align*}

\noindent
where the arrangement of boxes $\Skew(0: \mbox{ }|0: \mbox{ })$ of shape $\underline{d}_{\eta_{\mathscr{T}}}$ is obtained by replacing $\Skew(0: \mbox{\tiny{2}}|0: \mbox{\tiny{$\overline{2}$}})$ in $\op{res}(\mathscr{T})$ by blank squares.
\end{ex}

\begin{lem}
\label{evennumberofboxes}
Let $\mathscr{T}$ be as above. Then the shape $\underline{d}_{\eta_{\mathscr{T}}}$ is even. 
\end{lem}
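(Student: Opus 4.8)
The plan is to show that for each column of $\operatorname{res}(\mathscr{T})$, the number of boxes that get blanked out is even, since then the shape $\underline{d}_{\eta_{\mathscr{T}}}$ — whose $i$-th part records how many columns of $\operatorname{res}(\mathscr{T})$ lose exactly $i$ boxes, reorganized into an arrangement of boxes — will automatically be supported on even row-lengths. Equivalently, in each column we must pair up the blanked entries: recall that an entry $w$ gets blanked precisely when the ``opposite'' letter $\overline{w}$ (reading $\overline{\overline{l}} = l$) occurs to its left in the word $\operatorname{res}(w(\mathscr{T}))$, and simultaneously that occurrence of $\overline{w}$ is itself blanked. So within a single column, blanked entries should come in pairs $\{w, \overline{w}\}$.

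The key steps, in order, would be as follows. First I would observe that a symplectic column (a strictly increasing column over $\mathcal{C}_{n}$) can contain each value from $\{1,\dots,n,\bar n,\dots,\bar 1\}$ at most once; hence if both $w$ and $\overline{w}$ are blanked and both lie in the same column, they are blanked as a genuine pair and contribute $2$ to that column's blank count. Second, I would argue the converse matching: I claim that whenever an entry $w$ in a given column $C$ is blanked because of some $\overline{w}$ to its left, that matching $\overline{w}$ can be taken to lie in the \emph{same} column $C$. This is the crux. The reading word is obtained row by row, right to left, top to bottom; the blanking procedure, as described just before Example~\ref{exevenshape}, is to be read as a column-local cancellation (``in each column of $\operatorname{res}(\mathscr{T})$, replace an entry $w$ by a blank square if $\overline{w}$ appears to its left \dots\ In that case, replace the entry $\overline{w}$ by a blank square as well''), so that the cancellation pairs an entry with the nearest available opposite entry \emph{within its own column}. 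Making this precise — spelling out that the procedure is a column-by-column $\pm$-cancellation of the multiset $\{1,\dots,n,\bar n,\dots,\bar 1\}$-letters, where $l$ and $\bar l$ annihilate — immediately yields that each column's blanked entries form a union of pairs $\{l,\bar l\}$, hence are even in number. Third, I would conclude: since every column of $\operatorname{res}(\mathscr{T})$ contributes an even number of blanks, the multiset of blank-counts consists of even numbers, and reassembling these into the arrangement of boxes of shape $\underline{d}_{\eta_{\mathscr{T}}}$ (columns sorted by height, read off row-lengths) produces a shape all of whose nonzero parts sit in even-indexed rows; that is exactly the definition of \textbf{even} given in Section~\ref{skewsection}.

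The main obstacle I anticipate is the second step: carefully justifying that the pairing is column-internal. One has to rule out the scenario where an entry $w$ in column $C$ is blanked against an $\overline{w}$ lying in a different column $C'$ to its left, leaving an unpaired blank in $C$. The resolution should come from reading the definition of the blanking operation literally as acting columnwise — the outer loop is ``in each column'' — so that one performs, separately in each column, the standard signed cancellation of a word over $\{\pm 1,\dots,\pm n\}$, in which surviving letters all share one sign and cancelled letters pair off as $\{l,\bar l\}$. Once that reading is fixed, the parity statement is immediate; the only real content is bookkeeping the correspondence between ``appears to its left in $w(\operatorname{res}(\mathscr{T}))$'' and the column position, which is routine given that the word is read right-to-left within each row. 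I would present the argument by introducing, for a fixed column, the cancellation pairing explicitly and checking it is a perfect matching on the set of blanked entries of that column.
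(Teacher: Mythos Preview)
Your plan pivots entirely on Step~2 --- that the partner $\overline{w}$ of a blanked entry $w$ may be taken in the \emph{same} column --- and you propose to obtain this by reinterpreting the blanking rule as a purely column-local $\pm$-cancellation. That reinterpretation is not what the paper's definition says: the test is whether $\overline{w}$ occurs to the left of $w$ in the word $w(\operatorname{res}(\mathscr{T}))$ of the \emph{whole} tableau, and the companion $\overline{w}$ that gets blanked need not, a priori, sit in the column of $w$. More decisively, the Remark immediately following the Lemma exhibits a key $\mathscr{T}$ (a one-row filling $1,4,1$, still with strictly increasing columns and with $\operatorname{res}(w(\mathscr{T}))$ dominant) for which $\underline{d}_{\eta_{\mathscr{T}}}$ is \emph{not} even. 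Under your column-local reading every length-one column would produce zero blanks, hence $\eta_{\mathscr{T}}=0$, which is even --- contradicting that Remark. So your reading of the definition cannot be the intended one, and with the correct (global-word) reading your argument never uses the two hypotheses that the Remark shows are essential: dominance of $\operatorname{res}(w(\mathscr{T}))$ and the full semi-standardness of $\mathscr{T}$ (weakly increasing rows, not just strictly increasing columns). As written, your proof would apply verbatim to the key in the Remark and ``prove'' a false statement.

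What actually makes the pairing column-internal is a structural fact you have to earn. Using semi-standardness and dominance together, the first column $\mathscr{D}$ (reading from the right) that is not of the form $1,2,\dots$ must look like $1,2,\dots,s,\overline{l_{1}},\overline{l_{2}},\dots$ with each $l_{i}\leq s$; hence every barred entry in $\mathscr{D}$ is matched by its unbarred version already inside $\mathscr{D}$, giving an even contribution. One cannot simply repeat this for the next column, because the available ``budget'' of unbarred letters to the right has changed; the paper handles this by rebuilding a smaller semi-standard tableau with dominant restricted word and inducting on the number of standard columns. If you want to salvage your outline, the missing idea is precisely this: prove (using dominance and row semi-standardness) that in each column the barred letters are $\overline{l}$ with $l$ present higher up in the same column, and supply an inductive mechanism to propagate that statement leftwards.
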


\begin{rem}
Lemma \ref{evennumberofboxes} is only true for $\mathscr{T}$ a semi-standard Young tableau. Consider for example $n=2$ and the key (as in \cite{knuth}, \cite{jt}) $\mathscr{T} = \Skew(0:\hbox{\tiny{1}}, \hbox{\tiny{4}}, \hbox{\tiny{1}})$. Then $\op{res}(\mathscr{T}) = \Skew(0:\hbox{\tiny{1}}, \hbox{\tiny{$\bar 1$}}, \hbox{\tiny{1}})$ is dominant, however, the shape $\eta_{\mathscr{T}} = (1,0)$, which is not even.
\end{rem}

\begin{proof}
We will call a column standard if its entries are consecutive integers, starting with $1$. The proof is by induction on the number of right-most aligned consecutive standard columns in $\mathscr{T}$, counted from right to left. For example: the tableau $\Skew(0:\mbox{\tiny{1}},\mbox{\tiny{1}},\mbox{\tiny{1}}|0:\mbox{\tiny{2}},\mbox{\tiny{2}}|0:\mbox{\tiny{4}})$ has two right-most aligned consecutive standard columns, the tableau $\Skew(0:\mbox{\tiny{1}},\mbox{\tiny{1}},\mbox{\tiny{1}}|0:\mbox{\tiny{2}},\mbox{\tiny{4}}|0:\mbox{\tiny{3}})$ has one, and the tableau $\Skew(0:\mbox{\tiny{1}},\mbox{\tiny{1}}|0:\mbox{\tiny{2}},\mbox{\tiny{4}}|0:\mbox{\tiny{3}})$ has none. Let $\mathscr{D}$ be the first column in $\op{res}(\mathscr{T})$ (counted from right to left) that is not standard. Then there exists $s>0$ such that the first $s$ boxes of $\mathscr{D}$ are filled in with the numbers $i$ such that $i \leq s$, and its $s+1$-th box is filled in with $\overline{l}$ for some $l\leq n$. Since $\op{res}(\mathscr{T})$ has a dominant word, it must even hold that $l\leq s$. The same is true for the rest of the entries in $\mathscr{D}$, which are barred since entries are strictly increasing. The boxes in $\mathscr{D}$ with barred entries together with the boxes in $\mathscr{D}$ that have as entries their non-barred versions (they all exist, since the word of $\mathscr{T}$ is dominant) make up one of the columns of the arrangement of boxes of shape $\underline{d}_{\eta}$. This column has an even number of boxes.  Let us now ignore these entries. Let $\mathscr{C}$ be the closest column to the left of $\mathscr{D}$ that is not standard. For the induction step, we construct a new semi-standard Young tableau in which $\mathscr{C}$ is the first column to not be standard. Let $\mathscr{L}$ be the semi-standard Young tableau made up of the first right-most aligned standard columns of $\mathscr{T}$. Since the word of $\op{res}(\mathscr{T})$ is dominant, we may use the non-ignored entries in $\mathscr{D}$ to construct a new tableau $\mathscr{L}'$ from $\mathscr{L}$. We do this by adding one of these boxes either at the end of $\mathscr{L}$ or at the end of a column, in such a way that the resulting arrangement is still a semi-standard Young tableau with standard columns only. For example,  if $n= 3$ and $\mathscr{T} = \Skew(0:\mbox{\tiny{1}},\mbox{\tiny{1}},\mbox{\tiny{1}},\mbox{\tiny{1}},\mbox{\tiny{1}}|0:\mbox{\tiny{2}},\mbox{\tiny{2}},\mbox{\tiny{2}},\mbox{\tiny{2}}|0:\mbox{\tiny{3}},\mbox{\tiny{3}},\mbox{\tiny{3}}|0:\mbox{\tiny{4}},\mbox{\tiny{5}}|0:\mbox{\tiny{6}})$, then $\op{res}(\mathscr{T}) = \Skew(0:\mbox{\tiny{1}},\mbox{\tiny{1}},\mbox{\tiny{1}},\mbox{\tiny{1}},\mbox{\tiny{1}}|0:\mbox{\tiny{2}},\mbox{\tiny{2}},\mbox{\tiny{2}},\mbox{\tiny{2}}|0:\mbox{\tiny{3}},\mbox{\tiny{3}},\mbox{\tiny{3}}|0:\mbox{\tiny{$\overline{3}$}},\mbox{\tiny{$\overline{2}$}}|0:\mbox{\tiny{$\overline{1}$}})$ has a dominant word. Also, $\mathscr{L} = \Skew(0:\mbox{\tiny{1}},\mbox{\tiny{1}},\mbox{\tiny{1}}|0:\mbox{\tiny{2}},\mbox{\tiny{2}}|0:\mbox{\tiny{3}}), \mathscr{D} = \Skew(0:\mbox{\tiny{1}}|0:\mbox{\tiny{2}}|0:\mbox{\tiny{3}}|0:\mbox{\tiny{$\overline{2}$}}), \mathscr{C} = \Skew(0:\mbox{\tiny{1}}|0:\mbox{\tiny{2}}|0:\mbox{\tiny{3}}|0:\mbox{\tiny{4}}|0:\mbox{\tiny{6}}), \mathscr{L}' = \Skew(0:\mbox{\tiny{1}},\mbox{\tiny{1}},\mbox{\tiny{1}},\mbox{\tiny{1}}|0:\mbox{\tiny{2}},\mbox{\tiny{2}}|0:\mbox{\tiny{3}},\mbox{\tiny{3}})$, and the ignored boxes in $\mathscr{D}$ are $\Skew(0:\mbox{\tiny{2}})$ and $\Skew(0:\mbox{\tiny{$\overline{2}$}})$. Define a new tableau $\op{res}(\mathscr{T})'$ by concatenating all the columns in $\op{res}(\mathscr{T})$, from left to right and up to $\mathscr{D}$, with $\mathscr{L}'$. It follows from the construction that $\op{res}(\mathscr{T})'$ has a dominant word and is semi-standard. In the previous example, this new tableau is $\op{res}(\mathscr{T})' = \Skew(0:\mbox{\tiny{1}},\mbox{\tiny{1}},\mbox{\tiny{1}},\mbox{\tiny{1}},\mbox{\tiny{1}}|0:\mbox{\tiny{2}},\mbox{\tiny{2}},\mbox{\tiny{2}}|0:\mbox{\tiny{3}},\mbox{\tiny{3}},\mbox{\tiny{3}}|0:\mbox{\tiny{$\overline{3}$}}|0:\mbox{\tiny{$\overline{1}$}})$. We may then apply the induction hypothesis. This concludes the proof. 
\end{proof}

\begin{lem}
\label{easylemma}
If $\lambda = a_{1} \omega_{1} + a_{2}\omega_{2} + a_{3} \omega_{3}$
and $\nu$ and $\eta$ are dominant weights in $\p_{A_{2n-1}}$ such that $\underline{d}_{\eta}$ is even and  both $\underline{d}_{\eta}$ and $\underline{d}_{\nu}$ are contained in $\underline{d}_{\lambda}$, then 
\begin{align*}
c^{\lambda}_{\nu, \eta}(\s) = c^{\lambda}_{\nu, \eta}.
\end{align*}
\end{lem}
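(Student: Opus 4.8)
The plan is to show that, under the hypothesis $\lambda = a_1\omega_1 + a_2\omega_2 + a_3\omega_3$, every Littlewood-Richardson tableau of skew shape $\lambda/\nu$ and even weight $\eta$ is automatically $n$-symplectic Sundaram, so that the two sets $\op{LR}(\lambda/\nu,\eta)$ and $\op{LRS}(\lambda/\nu,\eta)$ coincide and hence have the same cardinality. Recall that the Sundaram condition fails only if for some $i \in \{0,1,\dots,\tfrac12 l(\underline{d}_\eta)\}$ the entry $2i+1$ appears strictly below row $n+i$. Since $\lambda$ has at most three rows (because only $\omega_1,\omega_2,\omega_3$ occur), the shape $\underline{d}_\lambda$, and therefore any skew subshape $\lambda/\nu$, has at most three rows; so all entries of any tableau of shape $\lambda/\nu$ lie in rows $1,2,3$.

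The key step is then a case analysis on $n$. If $n \geq 3$, then row $n+i \geq 3$ for every $i \geq 0$, so "strictly below row $n+i$" means strictly below row $3$, which is vacuous — there are no such rows. Hence the Sundaram condition holds trivially and $c^{\lambda}_{\nu,\eta}(\s) = c^{\lambda}_{\nu,\eta}$. The remaining case is $n = 2$. Here the only potentially dangerous value of $i$ is $i = 0$: we would need the entry $1$ to appear strictly below row $n = 2$, i.e. in row $3$. But a Littlewood-Richardson tableau has a dominant word, and in any column the entries are strictly increasing from the top; if a $1$ appeared in row $3$ of a column, then by semistandardness the entries strictly above it in that column would have to be smaller than $1$, which is impossible. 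More precisely, in a tableau with a dominant (lattice/Yamanouchi) word, reading the word from the right, every prefix contains at least as many $k$'s as $(k{+}1)$'s; the first letter read must therefore be $1$, and a $1$ can only ever occupy the top box of its column. Thus a $1$ can occur only in row $1$, never strictly below row $2$, and the Sundaram condition is again automatic. (For $i \geq 1$ when $n=2$ one needs $l(\underline{d}_\eta) \geq 2i$, i.e. at least two rows, but $2i+1 \geq 3$ would have to appear strictly below row $n+i \geq 3$, again vacuous since there are no rows beyond the third.)

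Assembling these observations: in all cases under consideration, the extra combinatorial constraint defining $\op{LRS}$ imposes nothing beyond being a Littlewood-Richardson tableau, so $\op{LRS}(\lambda/\nu,\eta) = \op{LR}(\lambda/\nu,\eta)$ and the coefficients agree. I expect the main subtlety to be bookkeeping the boundary case $n = 2$, $i = 0$ carefully — making sure that the bound "entry $1$ lies only in row $1$" really does follow from the dominant-word condition rather than merely from semistandardness (a semistandard tableau can of course have a $1$ low in a column if earlier rows are skipped, but the lattice condition on the word forbids this). Once that point is pinned down, the rest is immediate from the row-count restriction forced by $\lambda$ being supported on $\omega_1,\omega_2,\omega_3$.
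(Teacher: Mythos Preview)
Your case split is sound, and the argument for $n\geq 3$ is correct: since $\underline d_\lambda$ has at most three rows, the Sundaram constraint ``$2i+1$ not strictly below row $n+i$'' is vacuous whenever $n+i\geq 3$. The gap is in the $n=2$, $i=0$ case. You assert that the dominant-word (lattice) condition forces every $1$ to lie in row $1$, but this is false for \emph{skew} tableaux: column-strictness only places a $1$ at the top of its column within the skew shape $\lambda/\nu$, and that top box can sit in row $3$ if the boxes above it belong to $\nu$. Concretely, take $\lambda=\omega_3$, $\nu=\omega_2$, and fill the single box of $\lambda/\nu$ (which lies in row $3$) with a $1$. The word is ``$1$'', which is dominant, so this is a genuine Littlewood--Richardson tableau with a $1$ in row $3$; for $n=2$ it is not Sundaram. (This is precisely the tableau $\mathscr{T}$ in Example~\ref{firstexamplensymplectic}.) So the lattice condition alone does not exclude $1$'s below row~$2$.

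What your argument omits is the hypothesis that $\underline d_\eta$ is even, and the paper's proof uses it essentially. Since $\underline d_\eta\subset\underline d_\lambda$ has columns of length at most $3$ and is even, every column of $\eta$ has length $2$, so $\eta=k\omega_2$ and any tableau of weight $\eta$ contains exactly $k$ ones, $k$ twos, and nothing else. Now suppose some $1$'s occur in row $3$; being the smallest entries they occupy the leftmost filled boxes of row $3$, hence are the very last letters in the word. Just before reading them, dominance gives (number of $1$'s read) $\geq$ (number of $2$'s read); after reading them only the count of $1$'s increases, so at the end the number of $1$'s strictly exceeds the number of $2$'s, contradicting equality. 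This is the substance of the paper's argument; once you invoke the evenness of $\eta$, your $n=2$ case goes through.
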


\begin{proof}
Assume that $\mathscr{T}$ is a Littlewood-Richardson tableau of skew shape $\lambda / \nu$ and weight $\eta$ that is not Sundaram. This means that there is at least a ``1'' in the third row. Since $\mathscr{T}$ is semi-standard, all the ``1'''s in the third row must appear left-most and all next to one another. But since $\underline{d}_{\eta}$ is even, for each of these 1's there must exist a ``2''  that appears before it , in the word reading order. But this means, since the word is dominant, that there must have appeared a 1 before this ``2''. This contradicts the evenness of $\underline{d}_{\eta}$!
\end{proof}

\begin{thm}
\label{conny2}
The Naito-Sagaki conjecture is true for $n = 2$ and for any $n$ if $\lambda = a_{1}\omega_{1} + a_{2}\omega_{2} + a_{3}\omega_{3}$.
\end{thm}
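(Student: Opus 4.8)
The plan is to compare the two combinatorial descriptions of the restriction multiplicities: the one given by the Naito-Sagaki conjecture in terms of $\op{domres}(\lambda)$, and the one given by Sundaram's theorem (Theorem \ref{sundaramstheorem}) in terms of $\sum_{\underline{d}_\eta \text{ even}} c^\lambda_{\nu,\eta}(\s)$. Since Theorem \ref{sundaramstheorem} is already established, it suffices to produce, for each dominant $\nu$ with $\underline{d}_\nu \subset \underline{d}_\lambda$ and $l(\underline{d}_\nu) \le n$, a bijection
\begin{align*}
\op{domres}(\lambda,\nu) \;\longleftrightarrow\; \bigsqcup_{\underline{d}_\eta \text{ even}} \op{LRS}(\lambda/\nu,\eta).
\end{align*}
The construction $\mathscr{T} \mapsto (\op{res}(\mathscr{T}), \eta_{\mathscr{T}})$ described before Example \ref{exevenshape} is the candidate map: starting from $\mathscr{T} \in \op{domres}(\lambda,\nu)$, one forms $\op{res}(\mathscr{T})$, cancels each barred entry $\overline{w}$ against the matching unbarred $w$ lying to its left in the word, and reads off the cancelled boxes as a skew tableau of shape $\eta_{\mathscr{T}}/(\text{something})$ — more precisely the uncancelled part should be a Littlewood-Richardson (indeed Sundaram) filling of $\lambda/\nu$ with $\nu$ determined by $\delta(1)$.

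First I would make the target of this map precise: show that the uncancelled entries of $\op{res}(\mathscr{T})$, when the cancelled boxes are deleted and the remaining boxes pushed up within their columns, form a semistandard skew tableau whose word is dominant, hence a Littlewood-Richardson tableau of some skew shape $\lambda/\nu$ and even weight $\eta_{\mathscr{T}}$ — evenness is exactly Lemma \ref{evennumberofboxes}. Here is where the hypothesis on $\lambda$ enters: when $\lambda = a_1\omega_1 + a_2\omega_2 + a_3\omega_3$ (so $l(\underline{d}_\lambda) \le 3$), Lemma \ref{easylemma} tells us $c^\lambda_{\nu,\eta}(\s) = c^\lambda_{\nu,\eta}$, so I don't need to separately verify the Sundaram condition — any LR tableau appearing is automatically Sundaram, and the distinction between $\op{LR}$ and $\op{LRS}$ collapses. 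For $n=2$ one has $l(\underline{d}_\nu) \le 2$ automatically once $\underline{d}_\nu \subset \underline{d}_\lambda$ with $\nu$ stable, and Remark \ref{redundance2} pins down the alphabet of the LR tableaux. So for the cases in the theorem the target simplifies to $\bigsqcup_\eta \op{LR}(\lambda/\nu,\eta)$ with $\eta$ ranging over even shapes inside $\underline{d}_\lambda$.

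Next I would build the inverse map. Given an LR tableau $\mathscr{S}$ of skew shape $\lambda/\nu$ and even weight $\eta$, I want to reconstruct $\mathscr{T} \in \op{domres}(\lambda,\nu)$: the cancelled boxes carry a canonical pairing (the $i$-th column of the even shape $\eta$, having an even number $2m$ of boxes, gets filled symmetrically as $1,2,\dots,m,\overline{m},\dots,\overline{1}$ after restriction, which corresponds before restriction to $1,\dots,m,2n{-}m{+}1,\dots,2n$), and merging these with $\mathscr{S}$ column by column, then unrestricting (replace $\overline{l}$ by $2n-l+1$), should yield a semistandard Young tableau of shape $\lambda$ whose restricted word is dominant. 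The two constructions are manifestly inverse on the level of columns, so the main work is checking well-definedness: that the merge is semistandard (columns strictly increasing, rows weakly increasing) and that the resulting word restricts to a dominant path. I expect the \textbf{main obstacle} to be precisely this column-merging bookkeeping — verifying that inserting the symmetric "cancelled" columns into an arbitrary LR filling never violates semistandardness and always produces a dominant restricted word, uniformly in $n$. Bounding the column lengths by $3$ (resp. working with $n=2$) is what keeps the case analysis finite: the cancelled columns have length $0$ or $2$, the LR columns have length $\le 3$, and one can check directly that the only interactions are benign. Once the bijection is established, counting both sides and invoking Theorem \ref{sundaramstheorem} gives the decomposition asserted in Conjecture \ref{naitosagakiconjecture}, completing the proof.
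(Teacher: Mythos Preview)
Your overall architecture is right — reduce to Sundaram's theorem, use Lemma~\ref{easylemma} to drop the Sundaram condition, and exhibit a bijection — and this is exactly the paper's strategy. But your bijection is aimed at the wrong target, and the forward map you describe is internally inconsistent.

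The paper's bijection (Proposition~\ref{theclaim!}) lands in $\bigcup_{\eta}\op{LR}(\lambda/\eta,\nu)$, not in $\bigcup_{\eta}\op{LR}(\lambda/\nu,\eta)$. This is why the paper invokes the symmetry $c^{\lambda}_{\nu,\eta}=c^{\lambda}_{\eta,\nu}$ (Remark~\ref{redundance}) in the proof of Theorem~\ref{conny2}: Sundaram's multiplicity is $\sum_{\eta}c^{\lambda}_{\nu,\eta}(\s)$, Lemma~\ref{easylemma} turns this into $\sum_{\eta}c^{\lambda}_{\nu,\eta}$, and only then does the symmetry convert it to $\sum_{\eta}c^{\lambda}_{\eta,\nu}=\sum_{\eta}|\op{LR}(\lambda/\eta,\nu)|$, which is what the explicit map $\varphi$ actually counts. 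You skip this step and go directly for $\op{LR}(\lambda/\nu,\eta)$.

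That choice of target is why your forward map breaks. You say ``the uncancelled part should be a Littlewood--Richardson filling of $\lambda/\nu$ \ldots\ and even weight $\eta_{\mathscr{T}}$.'' But the uncancelled boxes number $|\lambda|-|\eta_{\mathscr{T}}|=|\nu|$, and their total weight is exactly $\nu$ (the cancelled $w,\overline{w}$ pairs contribute zero). A filling of $\lambda/\nu$ has $|\eta|$ boxes, not $|\nu|$; so the uncancelled part cannot be what you claim. Concretely, in Example~\ref{exevenshape} the uncancelled entries are three $1$'s and one $2$, of weight $\nu=2\omega_1+\omega_2$, and after pushing up they occupy columns of lengths $1,2,1$, which is neither the shape $\nu$ nor a skew shape $\lambda/\nu$. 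Your inverse has the same problem: you want to fill the inner shape $\nu$ with ``symmetric'' columns $1,\dots,m,\overline{m},\dots,\overline{1}$, but $\nu$ is not even and its columns have no reason to match those of $\eta$.

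The paper's actual map $\varphi$ does not push boxes around at all. It records the column types of $\mathscr{T}$ by four integers $m_1,m_2,m_3,m_4$, places the even shape $\eta_{\mathscr{T}}$ (with $m=m_1+m_2+m_3$ columns of length two) in the top-left of $\lambda$, and fills the skew complement $\lambda/\eta_{\mathscr{T}}$ row by row with $1$'s, $2$'s, $3$'s to get weight $\nu$; invertibility is then a linear system in $m_1,m_2,m_3$ (Claim~\ref{systemofeqnssolis}). If you want to repair your argument, either switch your target to $\op{LR}(\lambda/\eta,\nu)$ and use the symmetry as the paper does, or produce a genuinely different construction landing in $\op{LR}(\lambda/\nu,\eta)$ — but the cancellation picture you sketch does not do the latter.
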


\begin{proof}
Fix $\lambda$ and $\nu$ as in Lemma \ref{easylemma} above. Then, for all $\eta$ such that $\underline{d}_{\eta}$ is even, $c^{\lambda}_{\nu, \eta}(\s) = c^{\lambda}_{\eta, \nu}$, by Lemma \ref{easylemma} and Remark \ref{redundance}. The following proposition therefore proves the theorem.
\end{proof}

\begin{prop}
\label{theclaim!}
In case the assumptions of Theorem \ref{conny2} hold, there is a bijection
\begin{align*}
\op{domres}(\lambda, \nu) \overset{1:1}{\longleftrightarrow} \underset{\underline{d}_{\eta} \hbox{ \tiny{even} }}{\underset{\underline{d}_{\eta} \subseteq \underline{d}_{\lambda};}{\bigcup}} \op{LR}(\lambda/\eta, \nu)
\end{align*}
\end{prop}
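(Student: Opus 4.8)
The goal is to set up an explicit, weight-preserving bijection between tableaux $\mathscr{T}\in\op{domres}(\lambda,\nu)$ and pairs consisting of an even shape $\underline d_\eta\subseteq\underline d_\lambda$ together with a Littlewood–Richardson tableau of skew shape $\lambda/\eta$ and weight $\nu$. The construction producing $\eta_\mathscr{T}$ from $\mathscr{T}$ that precedes Lemma \ref{evennumberofboxes} is the blueprint for the forward map: I would send $\mathscr{T}$ to the data $(\eta_\mathscr{T},\mathscr{S}_\mathscr{T})$, where $\eta_\mathscr{T}$ is the even shape recorded by the cancelled (``blanked'') boxes of $\op{res}(\mathscr{T})$ — even by Lemma \ref{evennumberofboxes} — and $\mathscr{S}_\mathscr{T}$ is the skew tableau of shape $\lambda/\eta_\mathscr{T}$ obtained by keeping the boxes that survive cancellation, read with the original (unbarred) entries of $\mathscr{T}$ placed back in the outer skew region. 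The first thing to check is that $\mathscr{S}_\mathscr{T}$ is genuinely a Littlewood–Richardson tableau of weight $\nu$: semistandardness is inherited from $\mathscr{T}$ since we only delete boxes, and the word $w(\mathscr{S}_\mathscr{T})$ is exactly the subword of $w(\op{res}(\mathscr{T}))$ consisting of the uncancelled letters — which is a Yamanouchi (dominant) word precisely because the cancellation procedure is the standard bracketing that strips a lattice word down to its dominant core, and because $\op{res}(\pi_{w(\mathscr{T})})$ is dominant by hypothesis. The endpoint bookkeeping $\op{res}(w(\mathscr{T}))$ has weight $\tightoverset{\hbox{\textasciicircum}}{\nu}$ forces the surviving subword to have weight $\nu$, so $\mathscr{S}_\mathscr{T}\in\op{LR}(\lambda/\eta_\mathscr{T},\nu)$.

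For the inverse, given an even $\eta$ and a tableau $\mathscr{P}\in\op{LR}(\lambda/\eta,\nu)$, I want to reconstruct $\mathscr{T}$. The idea is to fill the (even) inner shape $\eta$ column by column in the unique way that cancels against the skew entries: in each column of $\eta$ of height $2m$, the bottom $m$ boxes receive barred letters and the top $m$ receive the matching unbarred letters, chosen so that the full column of $\op{res}(\mathscr{T})$ is strictly increasing in the order $1<\cdots<n<\bar n<\cdots<\bar 1$ and so that the total column is ``standard up to cancellation''. One then reads off $\mathscr{T}$ by undoing $\op{res}$, i.e. replacing each $\bar l$ by $2n-l+1$. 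The two maps are mutually inverse essentially by construction: applying the forward cancellation to the reconstructed $\mathscr{T}$ re-blanks exactly the boxes of $\eta$ and recovers $\mathscr{P}$, and conversely. Here is where the hypothesis $\lambda=a_1\omega_1+a_2\omega_2+a_3\omega_3$ (or $n=2$) does the real work: with at most three rows, a non-standard column of $\op{res}(\mathscr{T})$ — and hence a cancelled column — can only look like $(1)$, $(1,2)$, $(1,\bar 1)$, $(1,2,\bar 1)$, $(1,\bar 2,\bar 1)$, etc., so the ``even core'' of each column is forced and the column-filling in the inverse map is unambiguous. In higher rank there would be genuine choices (the remark after Lemma \ref{evennumberofboxes} with the key $\Skew(0:\hbox{\tiny{1}},\hbox{\tiny{4}},\hbox{\tiny{1}})$ already signals the failure of evenness), which is exactly why the general case is left open.

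I would then verify well-definedness of the forward map carefully: the only subtlety is that $\eta_\mathscr{T}$ must be a partition shape contained in $\underline d_\lambda$ and even — containment is automatic since its boxes sit inside $\op{res}(\mathscr{T})$, and evenness is Lemma \ref{evennumberofboxes} — and that the cancelled boxes in consecutive columns are left-justified and weakly decreasing in height, so that they assemble into a legitimate Young-diagram shape rather than a disconnected set of boxes; this again follows from semistandardness of $\op{res}(\mathscr{T})$ and the bracketing rule, using the low number of rows to rule out pathologies. Finally I would note that the bijection is by design compatible with the decomposition: summing $|\op{domres}(\lambda,\nu)|$ over the fibres gives $\sum_{\underline d_\eta \text{ even}}|\op{LR}(\lambda/\eta,\nu)| = \sum_{\underline d_\eta \text{ even}} c^\lambda_{\eta,\nu} = \N_{\lambda,\nu}$ by Theorem \ref{litrich}, Remark \ref{redundance}, Lemma \ref{easylemma} and Theorem \ref{sundaramstheorem}, which closes the proof of Theorem \ref{conny2}. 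The main obstacle I anticipate is not any single estimate but the combinatorial bookkeeping of the cancellation-versus-reconstruction correspondence at the level of columns: making precise that ``blank out $w$ whenever $\bar w$ occurs earlier'' is an involution on the relevant column data, and that it interacts correctly with semistandardness across adjacent columns — it is here that the three-row restriction is indispensable.
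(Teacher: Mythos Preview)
Your forward map, as stated, does not produce a Littlewood--Richardson tableau. The blanked boxes in $\op{res}(\mathscr{T})$ do \emph{not} occupy a partition shape in the upper-left corner: with $\lambda=a_1\omega_1+a_2\omega_2+a_3\omega_3$ the length-three columns of $\mathscr{T}$ are (left to right) the $m_4$ columns $(1,2,3)$, then the $m_2$ columns $(1,2,2n{-}1)$, then the $m_1$ columns $(1,2,2n)$, while among the length-two columns the $(1,2)$'s sit to the \emph{left} of the $m_3$ columns $(1,2n)$. So the cancelled pairs occur in columns $m_4{+}1,\dots,\lambda_3$ and then again in the rightmost $m_3$ of the length-two columns, with uncancelled $(1,2)$-columns in between; this is not a straight shape. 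Even if you sort the blank columns to form $\eta_{\mathscr{T}}$ and then push the surviving entries down into $\lambda/\eta_{\mathscr{T}}$, row three reads (left to right) $3^{m_4}1^{m_2}2^{m_1}$, which is not weakly increasing and hence not semistandard. The sentence ``placed back in the outer skew region'' hides exactly the step that fails, and the inverse ``fill $\eta$ in the unique way that cancels against the skew entries'' is neither unique nor well-defined for the same reason.

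The paper's argument is of a different nature: it does not try to transport boxes. It observes that an element of $\op{domres}(\lambda,\nu)$ is completely determined by the multiplicities $(m_1,m_2,m_3,m_4)$ of the four possible non-standard column types (subject to $m_1\le\lambda_1-\lambda_2$), and that an element of $\bigcup_{\eta}\op{LR}(\lambda/\eta,\nu)$ is determined by $(l_1,l_2,l_3,m)$ with $l_i$ the number of $i$'s and $m$ the number of (length-two) columns of $\eta$. The map $\varphi$ is then \emph{defined} by an explicit filling rule, and bijectivity is reduced to solving the linear system $l_1=\lambda_1-m_1-m_3$, $l_2=\lambda_2-m_2-m_3$, $m=m_1+m_2+m_3$ for non-negative integers, which is the content of the accompanying claim. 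If you want to salvage a structural bijection along your lines, you would need an intermediate rectification/reordering step and a proof that it lands in $\op{LR}(\lambda/\eta_\mathscr{T},\nu)$; as written, that step is missing.
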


\begin{proof}[Proof of Proposition \ref{theclaim!}]
Let  $ \mathscr{T} \in \op{domres}(\lambda, \nu)$, and set 
\begin{align*}
m_{1} &= \hbox{ \# columns in} \mathscr{T} \hbox{ of the form } \Skew(0: \mbox{\tiny{1}}|0: \mbox{\tiny{2}} |0: \mbox{\tiny{2n}}),\\
m_{2} &= \hbox{ \# columns in }\mathscr{T}\hbox{ of the form } \Skew(0: \mbox{\tiny{1}}|0: \mbox{\tiny{2}} |0: \mbox{\tiny{\mbox{\tiny{2n-1}}}}), \\
m_{3} &= \hbox{  \# columns in }\mathscr{T}\hbox{ of the form } \Skew(0: \mbox{\tiny{1}}|0: \mbox{\tiny{2n}})
\hbox{ and }\\
m_{4} &= \hbox{  \# columns in }\mathscr{T}\hbox{ of the form \Skew(0: \mbox{\tiny{1}}|0: \mbox{\tiny{2}} |0: \mbox{\tiny{3}})} .
\end{align*}
\noindent
If $n = 2$ then $m_{2} = m_{4}$. It follows from semi-standardness and dominance of $\op{res}(w(\mathscr{T}))$ that these are the only possible columns aside from columns of the form $\Skew(0: \mbox{\tiny{1}}|0: \mbox{\tiny{2}})$ and the single box columns $\Skew(0:1)$. Note that since $\op{res}(w(\mathscr{T}))$ is dominant, the following condition  holds
\begin{align}
\label{domi}
m_{1} \leq \lambda_{1} - \lambda_{2}. 
\end{align}
\noindent
Actually (\ref{domi}) is equivalent to the dominance of $\op{res}(w(\mathscr{T}))$, once the $b_{i}$ are set. We assign to $\mathscr{T}$ a Littlewood-Richardson tableau $\varphi(\mathscr{T}) \in \op{LR}(\lambda/\eta_{\mathscr{T}}, \nu)$. By Lemma \ref{evennumberofboxes}, $\eta_{\mathscr{T}}$ is even. Write 

\begin{align*}
\lambda = \lambda_{1} \e_{1} + \lambda_{2}\e_{2} + \lambda_{3}\e_{3}.
\end{align*}

\noindent
 Note that $\eta_{\mathscr{T}}$ has $m = m_{1}+m_{2} +m_{3}$ columns, all of length 2. Fill in the first $\lambda_{1} - m$ right-most boxes in the first row with a ``1'' , and the first $\lambda_{2} - b$ right-most boxes in the second row with a ``2''. If $n \neq 2$ fill in the first $m_{4}$ right-most entries of the third row with a ``3''. Then \label{conn2}fill in the next right-most $m_{1}$ entries in the third row with a ``2'', and the remaining entries with a ``1''.  The resulting tableau $\varphi(\mathscr{T})$ is a Littlewood-Richardson tableau by construction. Now we will show that any element in $ \underset{\eta \hbox{ \tiny{even} }}{\underset{\underline{d}_{\eta} \subset \underline{d}_{\lambda};}{\bigcup}} \op{LR}(\lambda/\eta, \nu)$ can be obtained in this way.
Let $\eta \in \p^{+}_{A_{2n-1}}$ have an even shape $\underline{d}_{\eta} \subset \underline{d}_{\lambda}$ (this means $\underline{d}_{\eta}$ consists of size 2 columns)  and let $\mathscr{L} \in \op{LR}(\lambda/\eta, \nu)$. Set

\begin{align*}
l_{1} &= \hbox{ \# of 1's in } \mathscr{L} \\
l_{2} &= \hbox{ \# of 2's in } \mathscr{L}, \hbox{ and } \\
b &=  \hbox{\# of columns of } \eta.
\end{align*} 

\noindent
Note that this information determines $\mathscr{L}$ together with $\lambda$. In view of the previous construction, we would like to find non-negative integers $m_{1}, m_{2}, m_{3}$ and $m_{4}$ such that 

\begin{align}
\label{1inverseconditionn2}
l_{1} &= \lambda_{1} - m_{1} - m_{3} \\
\label{2inverseconditionn2}
l_{2} &= \lambda_{2} - m_{3} -m_{2} \\
\label{3inverseconditionn2}
m &= m_{1} + m_{2} + m_{3}\\
m_{4} & = \hbox{ \# of 3's in } \mathscr{L}
\end{align}

\noindent
Since $\mathscr{L}$ has a dominant word, we have

\begin{align}
\label{dominverse}
l_{2} - (\lambda_{2} - b) \leq \lambda_{1} - \lambda_{2}.
\end{align}


\noindent
Substituting (\ref{2inverseconditionn2}) and (\ref{3inverseconditionn2}) in (\ref{dominverse}) we get precisely (\ref{domi}), so if we find solutions $m_{1},m_{2}, m_{3}$, the resulting tableau will automatically belong to $\op{domres}(\lambda, \nu)$. Claim \ref{systemofeqnssolis} below assures that this is the case.
\end{proof}

\begin{cl}
\label{systemofeqnssolis}
The system determined by (\ref{1inverseconditionn2}), (\ref{2inverseconditionn2}), and (\ref{3inverseconditionn2}), has integer solutions (possibly zero) $b_{1}, b_{2}$ and $b_{3}$ if and only if 
\end{cl}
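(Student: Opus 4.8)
The plan is to solve the linear system (\ref{1inverseconditionn2})--(\ref{3inverseconditionn2}) explicitly for the three unknowns $m_{1}, m_{2}, m_{3}$ in terms of the data $\lambda_{1}, \lambda_{2}, l_{1}, l_{2}, m$ (note these are the quantities the claim calls $b_{1}, b_{2}, b_{3}$), and then to read off exactly when the unique rational solution is a triple of non-negative integers. First I would observe that the three equations are independent and already in echelon form once we eliminate: from (\ref{1inverseconditionn2}) we get $m_{1} + m_{3} = \lambda_{1} - l_{1}$, from (\ref{2inverseconditionn2}) we get $m_{2} + m_{3} = \lambda_{2} - l_{2}$, and (\ref{3inverseconditionn2}) says $m_{1} + m_{2} + m_{3} = m$. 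Adding the first two and subtracting the third gives $m_{3} = (\lambda_{1} - l_{1}) + (\lambda_{2} - l_{2}) - m$, whence $m_{1} = m - (\lambda_{2} - l_{2})$ and $m_{2} = m - (\lambda_{1} - l_{1})$. So the solution is unique and automatically integral, since all the data on the right are integers; the only content is the non-negativity of each of $m_{1}, m_{2}, m_{3}$.

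Next I would translate those three inequalities into the statement of the claim. Non-negativity of $m_{1}$ reads $m \geq \lambda_{2} - l_{2}$; non-negativity of $m_{2}$ reads $m \geq \lambda_{1} - l_{1}$; non-negativity of $m_{3}$ reads $m \leq (\lambda_{1} - l_{1}) + (\lambda_{2} - l_{2})$. These can be packaged as
\begin{align*}
\max\{\lambda_{1} - l_{1},\ \lambda_{2} - l_{2}\} \ \leq\ m \ \leq\ (\lambda_{1} - l_{1}) + (\lambda_{2} - l_{2}),
\end{align*}
which is the ``if and only if'' the claim is after. I would also remark that the right-hand side is non-negative (so the interval is non-empty exactly when the max condition holds) because $\mathscr{L} \subseteq \lambda$ forces $l_{1} \leq \lambda_{1}$ and $l_{2} \leq \lambda_{2}$ — indeed, $\lambda_{i} - l_{i}$ counts the boxes of row $i$ of $\lambda$ not occupied by an $i$ of $\mathscr{L}$, so these differences are manifestly $\geq 0$ individually, which in fact makes the lower bound on $m$ automatic and shows the only binding condition is $m \leq (\lambda_{1}-l_{1})+(\lambda_{2}-l_{2})$.

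The bookkeeping obstacle — and really the only subtle point — is to make sure the inequalities I derive are genuinely the ones needed for $\varphi$ and its inverse to land in the correct sets, i.e. that ``$m_{1}, m_{2}, m_{3} \geq 0$ integers'' is not only necessary but sufficient for the recipe in the proof of Proposition \ref{theclaim!} to produce a bona fide element of $\op{domres}(\lambda, \nu)$ whose associated even shape $\eta_{\mathscr{T}}$ has exactly $m$ columns and recovers $\mathscr{L}$ under $\varphi$. For that I would check that with $m_{1}, m_{2}, m_{3}$ as above (together with $m_{4} = \#\{3\text{'s in }\mathscr{L}\}$, forced by (\ref{conn2}) when $n \neq 2$, and with $m_{2} = m_{4}$ automatically consistent when $n=2$ since then there are no $3$'s), the column multiplicities sum correctly to $\underline{d}_{\lambda}$: the number of $\Skew(0: \mbox{\tiny{1}}|0: \mbox{\tiny{2}})$-columns is then $\lambda_{2} - m_{1} - m_{2} - m_{3} - m_{4} \geq 0$ and the number of single-box columns is $\lambda_{1} - \lambda_{2} - m_{1} \geq 0$ — the latter being exactly the dominance condition (\ref{domi}), already shown in the proof of Proposition \ref{theclaim!} to follow from (\ref{dominverse}). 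Since (\ref{domi}) is equivalent to dominance of $\op{res}(w(\mathscr{T}))$ once the $m_i$ are fixed, this closes the loop, and the claim's biconditional is precisely the non-negativity of the displayed formula for $m_{3}$ (the lower bounds being automatic). I would end by stating the solved form explicitly so that the reader of the next section can substitute it.
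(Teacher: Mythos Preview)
Your core argument is correct and essentially the same as the paper's: both solve the $3\times 3$ linear system for $(m_1,m_2,m_3)$ explicitly and identify non-negativity of each component with one of the three displayed inequalities (\ref{finalexistence1})--(\ref{finalexistence3}). Your elimination (add the two rewritten equations and subtract the third) is marginally tidier than the paper's sequential substitution, but the resulting formulas $m_1 = m - (\lambda_2 - l_2)$, $m_2 = m - (\lambda_1 - l_1)$, $m_3 = (\lambda_1 - l_1) + (\lambda_2 - l_2) - m$ and the conclusion are identical.

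One genuine slip in your side remarks, though: from $\lambda_i - l_i \geq 0$ you \emph{cannot} conclude $m \geq \lambda_i - l_i$; that step is a non-sequitur. The lower bounds $m \geq \lambda_i - l_i$ \emph{do} hold for every $\mathscr{L}\in\op{LR}(\lambda/\eta,\nu)$, but for a different reason: row $i$ of the skew shape $\lambda/\eta$ has exactly $\lambda_i - m$ boxes (for $i=1,2$), and semi-standardness together with dominance of the word forces every one of them to carry an $i$, whence $l_i \geq \lambda_i - m$. Relatedly, your sentence ``$\lambda_i - l_i$ counts the boxes of row $i$ of $\lambda$ not occupied by an $i$ of $\mathscr{L}$'' tacitly assumes every $i$ lies in row $i$, which is what needs to be argued, not assumed. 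These remarks lie outside the claim proper and do not affect your proof of it, but you should either delete them or replace the justification with the correct one above; the paper itself simply asserts ``it follows from the definitions that these conditions are satisfied'' without elaboration.
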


\begin{align}
\label{finalexistence1}
m &\geq \lambda_{1} - l_{1} \\
\label{finalexistence2}
m &\geq \lambda_{2} -  l_{2}\\
\label{finalexistence3}
\lambda_{1} + \lambda_{2} &\geq m +l_{1} + l_{2}.
\end{align}

\noindent
It follows from the definitions that these conditions are satisfied by all elemen\label{conn2}ts of

\begin{align*}
\underset{\underline{d}_{\eta} \hbox{ \tiny{even} }}{\underset{\underline{d}_{\eta} \subset \underline{d}_{\lambda};}{\bigcup}} \op{LR}(\lambda/\eta, \nu).
\end{align*}

\noindent
To conclude we give a proof of Claim \ref{systemofeqnssolis}. 
\begin{proof}[Proof of Claim \ref{systemofeqnssolis}]
We need to solve the system of equations determined by (\ref{1inverseconditionn2}), (\ref{2inverseconditionn2}), and (\ref{3inverseconditionn2}). From (\ref{1inverseconditionn2}) and (\ref{3inverseconditionn2}) we have 

\begin{align}
\label{1eqnsn2b2}
m_{1} &=\lambda_{1} - l_{1} - m_{3}\\
\label{eqnsn2b2}
m_{2} &= m - \lambda_{1} + l_{1} - \lambda_{1}
\end{align}
\noindent
 Therefore $ m_{2} \geq 0$ if and only (\ref{finalexistence1}) holds. Substituting (\ref{eqnsn2b2}) into (\ref{2inverseconditionn2}) we get 
 
 \begin{align}
 \label{b3equation}
 m_{3} = \lambda_{1} + \lambda_{2} - l_{1} - l_{2} - m
 \end{align}
 \noindent
  Hence $m_{3} \geq 0$ if and only if (\ref{finalexistence3}) holds. Now substitute (\ref{b3equation}) into (\ref{1eqnsn2b2}) and get 
  
\begin{align}
m_{1} = l_{2} - \lambda_{2} + m,
\end{align}
\noindent
and hence $m_{1} \geq 0$ if and only if (\ref{finalexistence2}) holds. This concludes the proof of Claim \ref{systemofeqnssolis}.
\end{proof}

\section{Questions and perspectives}
The bijection from Proposition \ref{theclaim!} (for any $n$) would imply Conjecture \ref{naitosagakiconjecture} for a stable weight $\lambda$. A more general bijection

\begin{align*}
\op{domres}(\lambda, \nu) \overset{1:1}{\longleftrightarrow} \underset{\underline{d}_{\eta} \hbox{ \tiny{even} }}{\underset{\underline{d}_{\eta} \subseteq \underline{d}_{\lambda};}{\bigcup}} \op{LRS}(\lambda/\eta, \nu)
\end{align*}

\noindent
would imply Conjecture \ref{naitosagakiconjecture} for any $n$ and any $\lambda$. The questions of how to establish such a bijection in general, and of how to generalize Conjecture \ref{naitosagakiconjecture} to other types, are the motivation for \cite{beajaz}.

\end{document}